\documentclass[reqno]{amsart}

\usepackage{amsmath}
\usepackage{amssymb}
\usepackage{amsfonts}
\usepackage{graphicx}
\usepackage{amsthm}
\usepackage{enumerate}
\usepackage{lscape}
\usepackage{dsfont}
\usepackage{color}
\usepackage{mathtools}

\usepackage{setspace}
%\singlespacing
\onehalfspacing

\newcommand{\R}{\mathds{R}}                   
\newcommand{\z}{\mathds{Z}}

\newcommand{\CP}{\mathds{C}\mathrm{P}}
\newcommand{\N}{\mathds{N}}

\newcommand{\CH}{\mathds{C}\mathrm{H}}

\newcommand{\ol}{\mathrm{Hol}}
\newcommand{\Ric}{\mathrm{Ric}}

\newcommand{\C}{\mathds{C}}            
\newcommand{\de}{\partial}          

\newcommand{\K}{K\"{a}hler}

\newcommand{\ov}[1]{\overline{#1}}

%{{\operatorname{eu}}}

\newcommand{\Sz}{Szeg\"{o}}
\renewcommand{\S}{\mathcal{S}}

\def\b{\beta}

\def\bC{{\mathbb C}}

\def\b1{{\rm id}}

\newtheorem{theor}{Theorem}[section]
\newtheorem{prop}[theor]{Proposition}

\newtheorem{lem}[theor]{Lemma}

\newtheorem{remark}{Remark}

\begin{document}

\title[Finite TYCZ expansions and cscK metrics ]{Finite TYCZ expansions and cscK metrics}

\author{Andrea Loi}
\address{(Andrea Loi) Dipartimento di Matematica \\
         Universit\`a di Cagliari (Italy)}
         \email{loi@unica.it}

\author{Roberto Mossa}
\address{(Roberto Mossa) 
Departamento de Matem\'atica \\
         Universidade Federal de Santa Catarina (Brasil)}
         \email{roberto.mossa@gmail.com}

\author{Fabio Zuddas}
\address{(Fabio Zuddas) Dipartimento di Matematica e Informatica \\
           Universit\`a di Cagliari (Italy)}
         \email{fabio.zuddas@unica.it}

\thanks{
The first two  authors were  supported  by Prin 2015 -- Real and Complex Manifolds; Geometry, Topology and Harmonic Analysis -- Italy, by INdAM. GNSAGA - Gruppo Nazionale per le Strutture Algebriche, Geometriche e le loro Applicazioni,  by GESTA - Funded by Fondazione di Sardegna and Regione Autonoma della Sardegna and by KASBA- Funded by Regione Autonoma della Sardegna.}
\subjclass[2000]{53D05;  53C55;  53D05; 53D45} 
\keywords{TYCZ expansion; Szeg\"{o} kernel; log-term; Kempf distortion function; radial potential; constant scalar curvature metric.}

\begin{abstract}
Let $(M, g)$ be  a  \K\ manifold whose associated  \K\ form $\omega$ is integral and let $(L, h)\rightarrow (M, \omega)$
be a quantization hermitian line  
 bundle.
In  this paper we study  those \K\ manifolds $(M, g)$ admitting a {\em finite} TYCZ expansion, namely  those for which the associated Kempf  distortion function $T_{mg}$ is of the form: 
$$T_{mg}(p)= f_{s}(p) m^{s}+f_{s-1}(p) m^{s-1}+\dots+f_{r}(p)m^{r}, \quad f_{j}\in \mathcal{C}^{\infty}(M), \ 
s,r\in \z.$$ 
We show that if  the TYCZ expansion is finite then $T_{mg}$ is indeed a polynomial in $m$ of degree $n$, $n=\dim_\C M$, and the log-term of the  Szeg\"{o} kernel of the disc bundle $D\subset L^*$ 
vanishes
(where $L^*$ is the dual bundle of $L$).  Moreover, we provide  a complete classification of the \K\ manifolds  admitting  finite TYCZ expansion either  when $M$ is a complex curve or  when $M$ is a complex surface with a cscK metric which admits a   radial  \K\ potential.
\end{abstract}
 
\maketitle

\tableofcontents  

\section{Introduction}

Let $M$ be a  (not necessarily compact) $n$-dimensional complex manifold endowed with a K\"ahler metric $g$. Assume that there exists a 
holomorphic line bundle  $L$ over $M$ such that   $c_1(L)=[ \omega ]$,
where $\omega$ is the K\"{a}hler form associated to $g$ and $c_1(L)$
denotes the first Chern class of $L$
(such an $L$ exists if and only if  $\omega$ is an integral form).
Let $m\geq 1$ be an
integer and let $h_m$ be an  Hermitian metric on
$L^m=L^{\otimes m}$ such that its Ricci curvature ${\rm Ric}
(h_m)=m\omega$. 
Here ${\rm Ric} (h_m)$ is the two--form on $M$ whose
local expression is given by
\begin{equation}\label{rich}
{\rm Ric} (h_m)=-\frac{i}{2\pi}
\partial\bar\partial\log h_m(\sigma (x), \sigma (x)),
\end{equation}
for a trivializing holomorphic section $\sigma :U\rightarrow
L^m\setminus\{0\}$. 
In the quantum mechanics terminology $L^m$ is
called the {\em prequantum line bundle},
 the pair $(L^m, h_m)$ is called a {\em geometric
quantization} of the K\"{a}hler manifold $(M, m\omega)$ and $\hbar =
m^{-1}$ plays the role of Planck's constant  (see e.g.
\cite{arlquant}). Consider the separable complex Hilbert space
$\mathcal{H}_m$ consisting of global holomorphic sections  $s$ of $L^m$
such that
$$\langle s, s\rangle_m=
\int_Mh_m(s(p), s(p))\frac{\omega^n}{n!}<\infty .$$
Define the {\em Kempf distortion function}\footnote{In the literature the function $T_{m g}$ was first introduced under the name of $\eta$-{\em function} by J. Rawnsley in \cite{rawnsley}, later renamed as $\theta$-{\em function} in \cite{cgr1} followed by the {\em distortion function } of G. R. Kempf \cite{ke} and S. Ji \cite{ji}, for the special case of Abelian varieties and of S. Zhang
\cite{zha} for complex projective varieties.}, namely the smooth function on $M$ defined by:
\begin{equation}\label{Tmo}
T_{m g} (p) =\sum_{j=0}^{N(m)}h_m(s_j(p), s_j(p)),
\end{equation}
where $s_j$, $j=0, \dots , N(m)$ ($\dim \mathcal{H}_m=N(m)+1\leq\infty$) is an orthonormal basis of $\mathcal{H}_m$.

As suggested by the notation  this function depends only on the metric $m g$ and not on   the orthonormal basis chosen. Obviously if $M$ is compact $\mathcal{H}_m=H^0(L^m)$, where 
$H^0(L^m)$ is the (finite dimensional) space of global holomorphic sections of $L^{m}$.

By applying the methods developed in \cite{BoutSjos} and specifically the parametrix for the \Sz\ kernel, D. Catlin \cite{cat} and S. Zelditch
\cite{Zel} independently 
proved that if in the above setting $M$ is compact,  there exists
a complete asymptotic expansion  of the  Kempf distortion function:
\begin{equation}\label{asymptoticZ}
T_{mg}(p) \sim \sum_{j=0}^\infty  a_j(p)m^{n-j},
\end{equation}
where $a_0(p)=1$ and $a_j(p)$, $j=1,\dots$ are smooth functions on $M$.
This means that, for any nonnegative integers $r,k$ the following estimate holds:
\begin{equation}\label{rest}
||T_{mg}(p)-
\sum_{j=0}^{k}a_j(p)m^{n-j}||_{C^r}\leq C_{k, r}m^{n-k-1},
\end{equation}   where $C_{k, r}$ are constant depending on $k, r$ and on the
K\"{a}hler form $\omega$ and $ || \cdot ||_{C^r}$ denotes  the $C^r$
norm.
The expansion \eqref{asymptoticZ} is called {\em Tian--Yau---Catlin-Zelditch expansion} (TYCZ expansion in the sequel). 
Later on,  Z. Lu \cite{lu},  by means of  Tian's peak section method,
 proved  that each of the coefficients $a_j(p)$ 
is a polynomial
of the curvature and its
covariant derivatives at $p$ of the metric $g$ which can be found
 by finitely many algebraic operations.
In particular, he computed the first three coefficients.
The first two are given by:
\begin{equation}\label{coefflu}
\left\{\begin{array}
{l}
a_1(p)=\frac{1}{2}{\rm scal}_g\\
a_2(p)=\frac{1}{3}\Delta{\rm scal}_g
+\frac{1}{24}(|R|^2-4|{\rm Ric} |^2+3{\rm scal}_g ^2),
\end{array}\right.
\end{equation}
where ${\rm scal}_g$, ${\rm Ric}$, $R$, are, respectively,  the scalar curvature, the Ricci tensor and the Riemann curvature tensor
of  $(M, g)$, in local coordinates.
The reader is also referred to  
\cite{loianal} and \cite{asymsmooth} for a  recursive formula  for the coefficients $a_j$'s  and  an alternative computation of  $a_j$ for $j\leq 3$ using Calabi's diastasis function  (see also \cite{LZedda15} for the case of locally Hermitian symmetric spaces).
When $M$ is noncompact, there is not a general theorem which assures the existence of an asymptotic expansion \eqref{asymptoticZ}. 
Observe that in this case  we say that an asymptotic expansion \eqref{asymptoticZ} exists if \eqref{rest} holds for any compact subset of  $M$. M. Engli\v{s} \cite{englis2} showed that a TYCZ  expansion exists in the case of strongly pseudoconvex bounded domains in $\C^n$ with real analytic boundary, and proved that the first three coefficients are the same as those computed by Lu for compact manifolds.  
The reader is referred to  \cite{MaMarinescu}  for the description of some curvature conditions which assure the existence of a TYCZ expansion in the noncompact case (see also \cite{commtodor} and  \cite{LoiZeddaTaub} for some explicit examples).

Consider the negative Hermitian line bundle $(L^*,h^* )$ over $(M, g)$ dual   to $(L, h)$ and
let $D\subset L^*$ be the unit disk bundle over $M$, i.e.
\begin{equation}
D=\{v\in L^* \ |\  \rho (v):=1-h^*(v, v)>0\}.
\end{equation}
It is not hard to see (and well-known)  that the condition $\Ric (h)=\omega$ implies that $D$ is a strongly pseudoconvex   domain in $L^*$ with smooth boundary
$X=\partial D=\{v\in L^*\ |\ \rho (v)=0\}$. $X$ will be called  the  {\em unit circle bundle}. 
Let ${\mathcal S}(v)$ be  the {\it Szeg\"{o}  kernel} of $D$ (see Section \ref{sec1} below).
By a fundamental result due to   Boutet de Monvel and Sj\"{o}strand \cite{BoutSjos} \footnote{This formula \eqref{fefferman} has been proved for strictly pseudoconvex  complex domains  in $\C^n$ with smooth boundary, but it could be easily extended to the disc bundle $D\subset L^*$ (see, e.g., \cite{LuTian}).}
there exist $a, b\in C^{\infty} (\bar D)$, $a\neq 0$ on $X$ such that:
\begin{equation}\label{fefferman}
{\mathcal S}(v)=a(v)\rho(v)^{-n-1}+b(v)\log\rho (v), \ v\in D.
\end{equation}
The function $b(v)\log\rho (v)$ in (\ref{fefferman}) is called the {\em logarithmic term}  ({\em log-term} from now on) of the Szeg\"{o} kernel.  One says that the log-term  of the Szeg\"{o} kernel of the disk  bundle $D\subset L^*$ vanishes if $b=0$ identically on $D$.
The  Szeg\"{o} kernel  is strictly related to the  Kempf distortion function.
Indeed Z. Lu and G. Tian  \cite{LuTian}  prove that\footnote{The proof is given in the compact setting but it is of local nature so it  immediately extends to the noncompact one.} if  the  log-term of the disk bundle $D\subset L^*$ vanishes then
$a_k=0$ for $k>n$, where $a_k$ are the coefficients appearing in \eqref{asymptoticZ}. A conjecture still open, due to a private communication with  Z. Lu, asks if the vanishing of the $a_k$'s for $k>n$ implies the vanishing of the log-term.

In this paper we address the problem of studying those \K\ manifolds whose TYCZ expansion is {\em finite}, namely  the Kempf distortion function is of the form:
\begin{equation}\label{TPoly}
T_{mg}(p)= f_{s}(p) m^{s}+f_{s-1}(p) m^{s-1}+\dots+f_{r}(p)m^{r}, \quad f_{j}\in \mathcal{C}^{\infty}(M), \ 
s,r\in \z.
\end{equation}
Notice that this sort of  problem has been  partially investigated in the compact setting  by the first author of the present paper and by  C. Arezzo \cite{arlquant}.

One can give a
quantum-geometric interpretation of $T_{mg}$   as follows.
Assume that there exists $m$ sufficiently large such  that for each point $x\in M$
there exists $s\in \mathcal{H}_m$ non-vanishing at $x$ (such an $m$ exists if $M$ is compact  by standard algebraic geometry methods
and corresponds to the free-based point condition in Kodaira's theory).  Consider the
so called  {\em coherent states map}, namely the
holomorphic map of $M$ into the complex projective space
${\C}P^{N(m)}$ given by:
\begin{equation}\label{psiglob}
\varphi_m :M\rightarrow {\C}P^{N(m)}:
x\mapsto [s_0(x): \dots :s_{N(m)}(x)].
\end{equation}
One can prove (see, e.g.  \cite{arlcomm}) that
\begin{equation}\label{obstr}
\varphi ^*_m\omega_{FS}=m\omega_g +
\frac{i}{2\pi}\partial\bar\partial\log T_{mg} ,
\end{equation}
where $\omega_{FS}$ is the Fubini--Study form on
${\C}P^{N(m)}$, namely the  \K\ form which in homogeneous
coordinates $[Z_0,\dots, Z_{N(m)}]$ reads as 
$\omega_{FS}=\frac{i}{2\pi}\partial\bar\partial\log \sum_{j=0}^{N(m)}
|Z_j|^2$.
Recall that a K\"ahler metric $g$ on a complex manifold $M$ is said to be \emph{projectively induced} if there exists a K\"ahler (isometric and holomorphic) immersion of $(M, g)$ into the finite or infinite dimensional   complex projective space $(\C P^N, g_{FS})$, $N \leq +\infty$,  endowed with the Fubini--Study metric $g_{FS}$. The reader is referred to \cite{LoiZedda-book} for further details and for  un updated account on projectively induced \K\ metrics.
Obviously not all K\"{a}hler metrics are projectively induced.
Nevertheless, by combining  \eqref{obstr} and the existence of a  TYCZ expansion one gets that 
$\frac{\varphi_m ^*g_{FS}}{m}$ $C^{\infty}$-converges to $g$.
 In other words, any 
metric $g$ with integral \K\ form $\omega$  on a  complex manifold is the $C^{\infty}$-limit of
(normalized) projectively induced K\"{a}hler metrics (under the assumption 
of the existence of a TYCZ expansion). 
In the compact case this was a conjecture of Yau   proved by 
G. Tian \cite{ti0} and  W. D. Ruan \cite{ru} by means of peak section method.

The following theorem represents our first result.

\begin{theor}\label{mainteor}
Let $(M, g)$ be a \K\ manifold with integral \K\ form $\omega$ and of complex dimension $n$.
Assume that  the corresponding  TYCZ expansion is finite.
Then $T_{mg}(p)$ is forced to be a  polynomial in $m$ of degree $n$ and the log-term of the Szeg\"{o} kernel of the disc bundle $D$ vanishes.
\end{theor}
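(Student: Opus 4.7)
The plan is to combine the uniqueness of the TYCZ asymptotic expansion \eqref{asymptoticZ} with the Boutet--Sj\"ostrand description \eqref{fefferman} of the \Sz\ kernel of the disc bundle $D$.

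First I would compare the finite expansion \eqref{TPoly} with the asymptotic \eqref{asymptoticZ}. Since a nonzero Laurent polynomial in $m$ cannot be $O(m^{-N})$ for every $N$, the two expansions must agree as formal series in $m$: this forces $s=n$, $f_n\equiv 1=a_0$, $f_{n-j}\equiv a_j$ for $0\le j\le n-r$, and $a_j\equiv 0$ for $j>n-r$. Hence the theorem reduces to showing $r\ge 0$, from which both stated conclusions follow.

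Next I would pass to the \Sz\ kernel side, using the identity
\[
\mathcal{S}(v)=\sum_{m\ge 1}T_{mg}(\pi(v))\,u^m,\qquad u=h^*(v,v)=1-\rho(v),
\]
coming from the $S^1$-Fourier decomposition of the Hardy space of $X=\partial D$. Substituting \eqref{TPoly} gives the finite polylogarithmic sum $\mathcal{S}(v)=\sum_{j=r}^n f_j(x)\,\mathrm{Li}_{-j}(u)$. For $j\ge 0$, $\mathrm{Li}_{-j}(u)$ is rational in $u$ with a single pole at $u=1$ of order $j+1$; for $j<0$, the Jonqui\`ere expansion near $u=1$ reads
\[
\mathrm{Li}_k(u)=A_k(\rho)-\frac{(-1)^{k-1}}{(k-1)!}\,\epsilon(\rho)^{k-1}\log\rho,\qquad \epsilon(\rho)=-\log(1-\rho),
\]
with $A_k$ real analytic. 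Comparing with \eqref{fefferman} and isolating the coefficient of $\log\rho$ yields the explicit relation
\[
 b(v)=-\sum_{k=1}^{-r}f_{-k}(x)\,\frac{(-1)^{k-1}}{(k-1)!}\,\epsilon(\rho)^{k-1},
\]
which vanishes identically iff $r\ge 0$.

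The main obstacle is then to force this vanishing. My plan is to compute the $u^m$-Taylor coefficients of both sides of $\sum_m T_{mg}(x)u^m=a(v)(1-u)^{-n-1}+b(v)\log(1-u)$ at $u=0$. Writing $b(v)=\sum_k b_k(x)\rho^k$ at $\rho=0$, the $b$-piece contributes for $m$ large the rational function
\[
T_m^{(b)}(x)=-\sum_{j\ge 0}\frac{\gamma_j(x)}{m-j},\qquad \gamma_j(x)=(-1)^j\sum_{k\ge j}b_k(x)\binom{k}{j},
\]
while the $a$-piece contributes a polynomial in $m$ of degree $n$. The Laurent-polynomial structure of $T_{mg}$ forces all simple poles of $T_m^{(b)}$ at positive integers to cancel, giving $\gamma_j\equiv 0$ for $j\ge 1$; viewing the $\gamma_j$'s as the Taylor coefficients of the formal shift $\tilde b(y+1)$ of $\tilde b(y)=\sum_k b_k(x)y^k$, this yields $b_k\equiv 0$ for every $k\ge 1$, so $b$ is constant in the fibre direction. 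Combining with the explicit polylogarithmic formula above for $b$, whose Taylor coefficients in $\rho$ are triangular linear combinations of $f_{-1},\dots,f_{-k-1}$, back-substitution then gives $f_{-k}\equiv 0$ for every $k\ge 2$. Finally, the residual contribution $-f_{-1}(x)/m$ to $T_{mg}$ is ruled out by invoking the Lu--Tian identification of $b|_{\partial D}$ with a specific multiple of $a_{n+1}$, combined with a refined analysis of their formulas showing that the finite-expansion constraints $a_{n+k}\equiv 0$ for $k\ge 2$ propagate to force $a_{n+1}\equiv 0$ as well. Hence $r\ge 0$, $T_{mg}$ is a polynomial in $m$ of degree $n$, and the log-term of the \Sz\ kernel of $D$ vanishes identically.
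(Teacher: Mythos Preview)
Your polylogarithmic identification of the log-coefficient in the third paragraph is correct, but it only \emph{expresses} $b$ in terms of the $f_{-k}$; it does not yet force $r\ge0$, and the argument you give for that remaining step has a genuine gap. In the fourth paragraph you expand $b$ (and implicitly $a$) in powers of $\rho$ at $\rho=0$, i.e.\ at the boundary $u=1$, and then read off the $u^m$-Taylor coefficients at $u=0$, the zero section. These two base points sit at opposite ends of the fibre, and the passage between them is not formal: the Boutet--Sj\"ostrand data $a,b\in C^\infty(\bar D)$ are only smooth, so their Taylor series at $\rho=0$ need not converge anywhere, while your explicit polylog $b$, being a polynomial in $\epsilon(\rho)=-\log(1-\rho)=-\log u$, is not even bounded at $u=0$ and hence is \emph{not} the Boutet--Sj\"ostrand $b$ there. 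Consequently neither the claim ``the $a$-piece contributes a polynomial in $m$ of degree $n$'' nor the claimed rational form $T_m^{(b)}=-\sum_{j}\gamma_j/(m-j)$ is justified: for a general smooth $a$ one has $[u^m]\big(a\,(1-u)^{-n-1}\big)=\sum_{\ell=0}^m \alpha_\ell\binom{m-\ell+n}{n}$ with $\alpha_\ell$ the Taylor coefficients of $a$ at $u=0$, an $m$-dependent sum that is not polynomial in $m$ unless $a$ happens to be polynomial in $u$. The closing appeal to a ``refined analysis'' of Lu--Tian to eliminate $f_{-1}$ is likewise too vague to count as a proof; their result goes from $b=0$ to $a_k=0$ for $k>n$, and you are implicitly trying to run part of that implication backwards.

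The paper sidesteps all of this by not invoking \eqref{fefferman} in the proof that $r\ge0$. It works with the single-fibre function $\phi(t)=(1-t)^{n+1}\sum_{m\ge0}t^m\,T_{mg}(p_0)$ and shows two elementary facts: (i) the TYCZ estimate $|T_{mg}(p_0)-m^n|\le C\,m^{n-1}$ alone forces every $t$-derivative $\phi^{(h)}$ to be bounded on $(0,1)$ (Lemma~\ref{boundedszego}); (ii) if some $f_{-k_0}(p_0)\neq0$, then the contribution $(1-t)^{n+1}\sum_{m\ge1}t^m m^{-k_0}$ makes $\phi^{(n+k_0)}(t)$ diverge as $t\to1^-$ (Lemma~\ref{lemmatec}). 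The contradiction gives $r\ge0$; only afterwards does \eqref{fefferman} enter, to conclude that $\rho^{n+1}\mathcal S=\phi$ extends smoothly to $\bar D$ and hence that the log-term vanishes.
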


The concept of finite TYCZ expansion is strictly related to  {\em regular quantizations} introduced in \cite{cgr1} in the context of the quantization by deformation of \K\ manifolds. 
One says that the quantization $(L, h)$ of a \K\ manifold $(M, g)$ is {\em regular} if the   Kempf distortion function
$T_{mg}$ (exists and) is a strictly positive constant for all  $m$ sufficiently large (see also \cite{berloimossa} and  \cite{FCAL} and reference therein). In S. Donaldson \cite{do} terminology a \K\ metric $g$ with integral \K\ form $\omega$ such that its Kempf distortion function is a positive constant is called {\em balanced}. Hence a quantization of a \K\ manifold $(M, g)$ is regular if  
$mg$ is balanced for all $m$ sufficiently large.

When $(M, g)$ is a compact  \K\ manifold which admits a regular quantization
then the TYCZ expansion is necessarily finite.
Indeed in that case $T_{mg}=\frac{h^{0}(L^m)}{V(M)}$, where $h^0(L^m)$ denotes the complex dimension of $H^0(L^m)$
and $V(M)=\int_M\frac{\omega^n}{n!}$ is the volume of $M$,
and so by Riemann--Roch theorem  $T_{mg}$ is a  monic  polynomial in  $m$  of degree $n$.
Thus, the vanishing of the log-term of the Szeg\"{o} kernel  in the last part of  Theorem \ref{mainteor} (which is in 
accordance with the above mentioned Lu's conjecture) extends the results obtained in \cite{ALZ}  and \cite{LMZ} in the compact and regular case. We believe that  in the compact case, finite TYCZ expansion implies regular quantization.

Nevertheless, in the noncompact case there exist \K\ manifolds with nonconstant Kempf distortion function and finite TYCZ expansion.
In order to describe an example  assume that  $M$ is  a complex domain (open and connected) of  $\C^n$ 
equipped with a  global \K\ potential  $\Phi :M\rightarrow\R$, 
i.e. $\omega=\frac{i}{2\pi}\partial\bar\partial \Phi$. In this case  $\omega$ is trivially integral and  the Hilbert space $\mathcal{H}_m$
agrees with $\mathcal{H}_{m\Phi}$
the weighted Hilbert space of square integrable holomorphic functions on $M$, with weight $e^{-m\Phi}$, namely
\begin{equation}\label{hilbertspace}
\mathcal{H}_{m\Phi}=\left\{ f\in\ol(M) \ | \ \, \int_M e^{-m\Phi}|f|^2\frac{\omega^n}{n!}<\infty\right\}.
\end{equation}
If $\mathcal{H}_{m\Phi}\neq \{0\}$
then   the Kempf distortion function reads as
\begin{equation}\label{kempfnonc}
T_{mg}(z)=e^{-m\Phi(z)}K_{m\Phi}(z, z), 
\end{equation}  
where  $K_{m\Phi}(z, z)=\sum_{j=0}^{N(m)} |f_j(z)|^2$ is the weighted reproducing kernel 
and $\{f_j\}$ an orthonormal basis  for $\mathcal{H}_{m\Phi}$.
Let now $p$ be a positive real number and 
$$M=\{(z_1, z_2)\in\C^2 \ | \ |z_1|^2+|z_2|^{\frac{2}{p}}<1\}$$
equipped with the \K\ form $\omega$, with \K\ potential
$$\Phi =-\log \left[(1-|z_1|^2)^p-|z_2|^2\right].$$
A straightforward computation  (see e.g. \cite[pp. 450-451]{me}) shows that the weighted reproducing kernel
is given by
$$K_{m\Phi}(z, z)=e^{m\Phi}\left[m^2+(c(z)-3)m+c(z)+2\right]$$
where
$$c(z)=\left(1-\frac{1}{p}\right)\left(1-\frac{|z_2|^{2}}{(1-|z_1|^2)^p}\right).$$
Thus, by \eqref{kempfnonc}, the Kempf distortion function reads as
\begin{equation}\label{exfinitenon}
T_{mg}(z)=m^2+(c(z)-3)m+c(z)+2.
\end{equation}
It follows that  for $p\neq 1$, $T_{mg}$ is a polynomial  in $m$ of degree $2$
with nonconstant coefficients ($a_1(z)=c(z)-3$ and $a_2(z)=c(z)+2$).
Notice that for $p=1$, $M$ is the complex hyperbolic plane, namely the   unit ball in $\C^2$
and $\omega$ equals the hyperbolic form,  and in this case the quantization is regular (see also below).

Our second result shows that, for a complex curve, finite TYCZ expansion  implies regular quantization and that this happens
only in  the complex space form case.

\begin{theor}\label{class1}
Let $M$ be a complex curve which admits a complete   \K\ metric $g$ whose corresponding  TYCZ expansion is finite. Then $(M, g)$ is \K\ equivalent to one of the following complex space forms:
\begin{itemize}
\item [(a)]
$(\C, g_0)$, where $g_0$ is the flat metric on $\C$.
\item [(b)]
$(\CH^1, \mu g_{hyp})$, where $g_{hyp}$ is the hyperbolic metric on the unit disk of $\C$ and $\mu$ is a positive real number.
\item [(c)]
$(\CP^1, \lambda g_{FS})$, where $g_{FS}$ is the Fubini-Study metric and $\lambda$ is a  positive integer.
\end{itemize}
\end{theor}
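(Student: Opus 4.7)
My plan is to combine Theorem \ref{mainteor} (which reduces the problem to a rigid structural constraint on the Kempf distortion function) with the classification of complete constant-curvature \K\ metrics on complex curves. First, applying Theorem \ref{mainteor} with $n=1$ forces $T_{mg}(p) = m + a_1(p)$, where $a_0=1$ by the leading asymptotics and $a_1 = \tfrac{1}{2}\,{\rm scal}_g$ by Lu's formula \eqref{coefflu}; the vanishing of the log-term, combined with the Lu--Tian result quoted in Section 1, then yields $a_k \equiv 0$ for every $k \geq 2$.

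The heart of the argument is to deduce that ${\rm scal}_g$ is constant. In complex dimension one the curvature identities $|R|^2 = |{\rm Ric}|^2 = \tfrac{1}{4}\,{\rm scal}_g^2$ hold, so \eqref{coefflu} for $a_2$ collapses to the semilinear elliptic equation
\[
\Delta\,{\rm scal}_g + \tfrac{9}{32}\,{\rm scal}_g^2 = 0.
\]
This equation alone admits nonconstant solutions, so I would further use the vanishing of $a_3$: expressing $a_3$ in terms of ${\rm scal}_g$, $|\nabla {\rm scal}_g|^2$, $\Delta {\rm scal}_g$, $\Delta^2 {\rm scal}_g$ via Lu's recursive formulas and the $n=1$ simplifications, then substituting the identity above, I expect to obtain a pointwise relation of the form
\[
A\,|\nabla {\rm scal}_g|^2 + B\,{\rm scal}_g^3 = 0
\]
with explicit nonzero constants $A,B$. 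Together with the completeness of $g$--exploited either by integration along geodesics or by an Omori--Yau maximum principle applied to a suitable transform of ${\rm scal}_g$--this forces $\nabla\,{\rm scal}_g \equiv 0$.

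Once ${\rm scal}_g \equiv c$ is constant, the classification becomes classical. The universal cover $(\widetilde M,\widetilde g)$ is a complete cscK simply connected Riemann surface, hence by the Uniformization Theorem and the uniqueness of complete constant-curvature \K\ metrics on $\CP^1$, $\C$, $\CH^1$, it is \K\ isometric to one of $(\CP^1,\lambda g_{FS})$, $(\C,g_0)$, or $(\CH^1,\mu g_{hyp})$, according as $c>0$, $c=0$, or $c<0$. Integrality of $\omega$ refines $\lambda$ to a positive integer (since $[\omega_{FS}]$ generates $H^2(\CP^1,\z)$), while $\mu>0$ is free and scalings of $g_0$ are absorbed by the biholomorphism $z\mapsto\sqrt{\lambda}\,z$ of $\C$. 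Finally, descent to $(M,g)$ itself--ruling out nontrivial quotients such as complex tori or compact hyperbolic surfaces--follows from the polynomial form of $T_{mg}$ together with its invariance under deck transformations.

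\textbf{Main obstacle.} The central difficulty is proving that ${\rm scal}_g$ must be constant. As the $\C^2$ example in the introduction shows, finite TYCZ does \emph{not} imply cscK when $n\geq 2$, so the argument must genuinely exploit the dimension-one identities among curvature norms. The delicate technical step is extracting, from the vanishing of $a_3$ after substitution of the $a_2=0$ relation, a workable balance between $|\nabla{\rm scal}_g|^2$ and ${\rm scal}_g^3$, and then ruling out nonconstant solutions via completeness.
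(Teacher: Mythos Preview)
Your overall architecture (Theorem \ref{mainteor} $\Rightarrow$ $T_{mg}=m+a_1$ $\Rightarrow$ cscK $\Rightarrow$ uniformization) is the same as the paper's, but two of your steps diverge from what the paper actually does, and one of them is a genuine gap.

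\medskip

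\textbf{On the cscK step.} Your curvature identity $|R|^2=|\mathrm{Ric}|^2=\tfrac14\,\mathrm{scal}_g^2$ is not correct in Lu's Hermitian conventions: for $n=1$ one has $|R|^2=|\mathrm{Ric}|^2=\mathrm{scal}_g^2$, so the combination $|R|^2-4|\mathrm{Ric}|^2+3\,\mathrm{scal}_g^2$ vanishes identically and $a_2=0$ reduces to $\Delta\,\mathrm{scal}_g=0$, not to the semilinear equation you wrote. The paper invokes \eqref{coefflu} directly at this point and asserts constancy without passing through $a_3$ or any Omori--Yau argument; your plan to extract a balance $A|\nabla\mathrm{scal}_g|^2+B\,\mathrm{scal}_g^3=0$ from $a_3=0$ is speculative and, with the correct $a_2$ equation, would have to be redone from scratch. (Incidentally, you do not need Lu--Tian to get $a_k=0$ for $k\ge 2$: once $T_{mg}$ is literally $m+a_1$, these vanish by uniqueness of the asymptotic expansion.)

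\medskip

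\textbf{On simple connectivity --- the real gap.} Your last sentence (``descent\ldots follows from the polynomial form of $T_{mg}$ together with its invariance under deck transformations'') is not an argument; it does not explain why a flat torus or a compact hyperbolic curve cannot have $T_{mg}=m+a_1$. The paper handles this by a concrete and rather different idea: since $T_g=1+a_1$ is constant, \eqref{obstr} shows the coherent states map $\varphi_1:M\to\C P^{N(1)}$ is a \emph{full} K\"ahler immersion. On the other hand, the universal cover $(\tilde M,\tilde g)$ is a one-dimensional space form and therefore admits an \emph{injective} full K\"ahler immersion $\psi:\tilde M\to\C P^N$. Composing, $\varphi_1\circ p:\tilde M\to\C P^{N(1)}$ is another full K\"ahler immersion inducing $\tilde g$; Calabi's rigidity theorem then forces $N=N(1)$ and $\varphi_1\circ p=U\circ\psi$ for some unitary $U$, hence $p$ is injective and $M$ is simply connected. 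This use of Calabi rigidity is the missing ingredient in your proposal.
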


Many examples of  \K\ manifolds admitting regular quantizations are obtained by taking simply-connected homogeneous 
\K\ manifolds with integral \K\ forms (see \cite{arlcomm}). 
Hence, for example the complex space forms namely the flat space $(\C^n, g_0)$ with the flat \K\ form 
$\omega_0=\frac{i}{2\pi}\partial\bar\partial |z|^2$, the hyperbolic space $(\CH^n, g_{hyp})$, i.e. the unit ball in $\C^n$
with the hyperbolic form $\omega_{hyp}=-\frac{i}{2\pi}\partial\bar\partial\log(1- |z|^2)$,
 the complex projective space $(\CP^n, g_{FS})$, admit  regular quantizations which, as  one can easily verify,   have  finite TYCZ expansion.

While in the compact case the homogeneous \K\ manifolds   are the only known examples admitting a regular quantization, in the noncompact case the first author
together with F. C. Aghedu \cite{FCAL} prove that  the  Kempf distortion function for  the Simanca metric $g_S$ on the blow-up $\tilde\C^2$  of $\C^2$ at the origin  is given by  $T_{mg_S}=m^2$.
Hence,   for the Simanca metric  the quantization is not just  regular but the TYCZ is finite with constant coefficients (all the coeffcients $a_k=0$ for $k\geq 1$). Notice that, if $H$ denotes  the exceptional divisor,  then $g_S$
has radial K\"ahler potential  on the dense subset $U=\tilde\C^2\setminus H=\C^2\setminus\{0\}$ given by
\begin{equation}\label{simancaprimavolta}
\Phi(z)=|z_1|^2+|z_2|^2+\log (|z_1|^2+|z_2|^2).
\end{equation}

Our third and last  result  shows that the complete  complex surfaces with a cscK (\K\ with constant scalar curvature)  metric with densily defined radial potential and finite TYCZ expansion are essentially the complex space forms
and $(\tilde\C^2, g_S)$.

\begin{theor}\label{class2}
Let $M$ be a complex  surface which admits a complete   cscK metric $g$  whose corresponding  TYCZ expansion is finite.  Assume, moreover, that the metric $g$ admits a radial  \K\ potential $\Phi :U\rightarrow \R$ defined on a dense subset $U$ of $M$. Then $(M, g)$ is \K\ equivalent to one of the following \K\ surfaces:
\begin{itemize}
\item [(i)]
$(\C^2, g_0)$, where $g_0$ is the flat metric on $\C^2$.
\item [(ii)]
$(\CH^2, \mu g_{hyp})$, where $g_{hyp}$ is the hyperbolic metric on the unit disk of $\C^2$ and $\mu$ is a positive real number.
\item [(iii)]
$(\CP^2, \lambda g_{FS})$, where $g_{FS}$ is the Fubini-Study metric and $\lambda$ is a positive integer.
\item [(iv)]
$(\tilde\C^2, \lambda g_{S})$, where  $\tilde\C^2$ denotes the  blow-up of $\C^2$ at the origin, $g_S$ the Simanca metric and $\lambda$ is a positive integer.
\end{itemize}
\end{theor}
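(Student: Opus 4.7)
The plan is to combine Theorem \ref{mainteor} with the cscK and radiality assumptions in order to reduce the problem to an ODE for the radial K\"ahler potential $\varphi$, and then to classify its complete solutions. Since $n=2$, Theorem \ref{mainteor} forces $T_{mg}(p) = m^2 + a_1(p)\,m + a_2(p)$, and formula \eqref{coefflu} together with the cscK hypothesis gives $a_1 \equiv \frac{1}{2}\mathrm{scal}_g = c$, a constant. On the dense open set $U$ where $\Phi = \varphi(r)$ with $r = |z_1|^2 + |z_2|^2$, the $U(2)$-invariance inherited from the radial potential forces $a_2$ to depend on $r$ alone, so that \eqref{kempfnonc} becomes $K_{m\varphi}(z,z) = e^{m\varphi(r)}(m^2 + c\,m + a_2(r))$.

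Next I would compute $K_{m\varphi}$ explicitly using the orthogonal monomial basis $\{z_1^{\alpha_1}z_2^{\alpha_2}\}$. A direct evaluation of the squared weighted norms, using $\det(g_{i\bar j}) = \varphi'(r)\,(r\varphi'(r))'$ and polar coordinates, gives
\[
K_{m\varphi}(r) \;=\; \frac{1}{\pi^2}\sum_{k=0}^{\infty}\frac{(k+1)\,r^k}{I_k(m)}, \qquad I_k(m) \;=\; \int_{0}^{R} r^k\,\psi(r)\,\psi'(r)\,e^{-m\varphi(r)}\,dr,
\]
where $\psi(r) = r\varphi'(r)$ and $R$ is the radial extent of $U$. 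Equating this with the formula of the previous paragraph yields the \emph{master identity}
\[
\sum_{k=0}^{\infty}\frac{(k+1)\,r^k}{I_k(m)} \;=\; \pi^2\,e^{m\varphi(r)}\bigl(m^2 + c\,m + a_2(r)\bigr),
\]
which must hold for every integer $m \geq 1$ and every $r$ in the domain of $\varphi$.

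From this identity I would extract a finite list of ODEs for $\varphi$. Integrating the master identity against test functions $r^j\psi\psi'e^{-m\varphi}$ produces an infinite hierarchy of relations among the moments $I_k(m)$, and a Laplace-method analysis as $m\to\infty$ converts these into pointwise differential constraints. Combined with the (fourth-order) cscK ODE for a radial potential on $\C^2$, this reduces the problem to a small system of ODEs whose solutions, up to additive constants and pluriharmonic terms, are
\[
\varphi(r) \;=\; r,\quad -\mu\log(1-r),\quad \lambda\log(1+r),\quad \lambda\bigl(r + \log r\bigr),\qquad \mu,\lambda>0,
\]
corresponding respectively to the flat, hyperbolic, Fubini--Study and Simanca K\"ahler potentials.

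Finally, completeness of $g$ on $M$ together with the integrality of $[\omega]$ pins down the ambient surface in each case: $\varphi(r)=r$ yields $(\C^2, g_0)$; $\varphi(r) = -\mu\log(1-r)$ forces $U$ to be the unit ball and hence $(\CH^2,\mu g_{hyp})$; $\varphi(r) = \lambda\log(1+r)$ extends across a complex hyperplane to $(\CP^2,\lambda g_{FS})$, with $\lambda$ forced to be a positive integer by $[\omega]\in H^2(\CP^2;\z)$; and $\varphi(r) = \lambda(r+\log r)$, which is smooth on $\C^2\setminus\{0\}$, extends across the exceptional divisor (as recalled in \eqref{simancaprimavolta}) to give $(\tilde\C^2,\lambda g_{S})$, again with $\lambda$ a positive integer by integrality. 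I expect the main obstacle to lie in the third step: translating the master identity, via careful asymptotic analysis of the moments $I_k(m)$, into an explicit finite system of ODEs for $\varphi$, and ruling out every solution of the combined cscK plus finite-TYCZ system other than the four listed.
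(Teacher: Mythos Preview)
Your outline has a genuine gap, and it also misses the key simplification that the paper exploits.

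The crucial observation you overlook is that, once Theorem \ref{mainteor} gives $T_{mg}=m^2+a_1m+a_2$, the vanishing of $a_3$ (automatic since the polynomial has degree $2$) together with the radial cscK hypothesis forces $a_2\equiv 0$ as well; this is the content of Remark \ref{a2vanish}, proved in \cite{LSZ}. Hence $T_{mg}=m^2+a_1m$ is \emph{constant} for every $m$, i.e.\ $g$ is balanced and in particular projectively induced. The paper then appeals to the classification of radial projectively induced cscK metrics with $a_3=0$ (Proposition \ref{radprojindcscka3=0}), which yields exactly five candidates: the four in the statement and the family with potential \eqref{explmetr7}. A direct computation of the weighted Bergman space for this fifth family shows that the balanced identity \eqref{defbalanced3} fails because the left-hand side contains monomials $|z_1|^{2j}|z_2|^{2k}$ with $j+k=\frac{\mu\lambda}{2}-1$ that do not appear on the right.

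Your route instead keeps $a_2(r)$ as an unknown function and proposes to extract the admissible $\varphi$'s directly from the ``master identity'' via Laplace asymptotics of the moments $I_k(m)$. This is precisely the step you flag as the main obstacle, and it is not carried out: there is no mechanism given for converting the infinite hierarchy of moment relations into a \emph{finite} ODE system, nor any argument that its solution set is exactly the four listed potentials. In fact the existing ODE analysis of radial cscK metrics with $a_3=0$ in \cite{LSZ} already produces eleven families \eqref{A=01}--\eqref{An0viii} satisfying all your local constraints; the family \eqref{An0v} is projectively induced (for suitable integer parameters) and hence cannot be eliminated by any purely local or asymptotic argument---it must be ruled out by an explicit Bergman-kernel computation, which your outline omits. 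Finally, your formula for $K_{m\varphi}$ tacitly assumes all $I_k(m)$ are finite, whereas for metrics such as \eqref{An0v} only the monomials with $j+k>\frac{\mu\lambda}{2}-(\lambda+1)$ lie in $\mathcal{H}_{m\Phi}$; this is exactly what makes the balanced condition fail there.
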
      
\begin{remark}\rm
The assumption on  the potential in Theorem \ref{class2} means that $U$ can be equipped with global complex coordinates $z_1$ and $z_2$ and $\Phi$ only depends on $|z_1|^2+|z_2|^2$.
Notice also that  $\Phi$  is not necessarily defined at the origin (see Remark \ref{remarkorigin} below for details).
\end{remark}

\begin{remark}\rm
If  we assume $M=\CP^2$ and  the finiteness of TYCZ expansion 
 then, by using the last part of Theorem \ref{mainteor}, one can get that $g=\lambda g_{FS}$ for some integer $\lambda$, without further assumptions (either on the curvature or on the potential).
 Indeed, a deep result due to Z. Lu and G. Tian \cite{LuTian} asserts that  an integral  \K\ form on $\CP^2$  such that the log-term of the disk bundle vanishes is an integral positive mutiple of  the Fubini-Study form.
 \end{remark}

The paper is organized as follows. Section \ref{sec1} and Section \ref{sec3} are dedicated to the proofs of Theorem \ref{mainteor} and Theorems \ref{class1}--\ref{class2} respectively.
The proof of the latter is based on the classification of radial cscK projectively induced metrics with $a_3=0$ given in 
Section \ref{sec2} (see Proposition \ref{radprojindcscka3=0}).

\section{The proof of Theorem \ref{mainteor}}\label{sec1}

Let $(M, g)$ be a \K\ manifold.  Assume that the \K\ form $\omega$ associated to $g$ is integral and let $(L, h)\rightarrow  (M, \omega)$ be a quantization bundle and $D\subset L^*$ be the corresponding disk bundle as in the introduction.
The proof of Theorem \ref{mainteor}  is based on the link between
the  Szeg\"{o} kernel of the disk bundle $D$ and the Kempf distortion function (see Equation \eqref{szegokempf} below) and  on the two subsequent lemmata (Lemma \ref{boundedszego} and Lemma \ref{lemmatec}). 
In order to obtain \eqref{szegokempf} 
let us denote by $\mathcal H^2(X)$ the space  of boundary values of holomorphic functions on $D$ that are square integrable on $X$ with respect to the measure $d \mu = \theta \wedge (d\theta)^n$, being $d \theta = \omega$. The Hardy space ${\mathcal H^2}(X)$ admits the Fourier decomposition into irreducible factors with respect to  the natural  $S^1$-action. Namely, 
\begin{equation}\label{decom}
\mathcal H^2(X) = \bigoplus_{m=0}^{+ \infty} \mathcal H_m^2(X)
\end{equation}
where 
\begin{equation}\label{decompH2k}
\mathcal H_m^2(X) = \{ f \in \mathcal H^2(X) \ | \ f(e^{i \theta} x) = e^{i m \theta} f(x) \},
\end{equation}
equivalently, $f(\alpha v) = \alpha^m f(v)$ for $\alpha \in \C$ (since $f$ is holomorphic).

By definition the {\it Szego kernel} ${\mathcal S}(z,w)$ is the reproducing kernel of $\mathcal H^2(X)$, i.e. is characterized by the properties ${\mathcal S}(z,w) \in \mathcal H^2(X)$ for every fixed $w \in D$, ${\mathcal S}(w,z) = \overline{{\mathcal S}(z,w)}$ and
\begin{equation}\label{propcharSK}
f(z) = \int_X {\mathcal S}(z,w) f(w) d \mu_w
\end{equation}
for every $f \in \mathcal H^2(X)$  and $z \in D$.
From these properties it is immediately seen that ${\mathcal S}(z,w) = \sum_{j=1}^{\infty} f_j(z) \overline{f_j(w)}$, where $\{ f_j \}$ is an orthonormal basis of $\mathcal H^2(X)$.
Let us denote ${\mathcal S}(z) := {\mathcal S}(z,z) = \sum_{j=1}^{\infty} |f_j(z)|^2$.
Now, by (\ref{decom}), an orthonormal basis of $\mathcal H^2(X)$ can be obtained by putting together orthonormal bases of $\mathcal H_m^2(X)$ for $m = 0,1, \dots$. If $f_{1}, \dots, f_{{N(m)}}$ form an orthonormal basis of $\mathcal H_m^2(X)$, let us denote

\begin{equation}\label{defS_kconf}
{\mathcal S}_m(v) := \sum_{j=1}^{N(m)} |f_{j}(v)|^2
\end{equation}

Then, we can write

\begin{equation}\label{S=sumS_kconf}
{\mathcal S}(v) = \sum_{m=0}^{\infty} {\mathcal S}_m(v)
\end{equation}

\begin{remark}\label{Remark k=0}
\rm Notice that $\mathcal H_0^2(X)$ is the space of holomorphic functions $f$ on $D$ such that $f(e^{i \theta} x) = f(x)$ for every $x \in X$, i.e. the functions which are  constant on the fiber above every point $p \in M$ and square integrable on $X$. If $M$ is compact, $\mathcal H_0^2(X)$ obviously contains only the constant functions and a basis is given by $f \equiv c$ such that $\int_{X} |c|^2 d \mu = 1$. If $M$ is not compact, $\mathcal H_0^2(X)$ identifies with the space of holomorphic functions $f$ on $M$ such that $\int_M |f|^2 \omega^n < \infty$, and ${\mathcal S}_0(v)$ is constant on each fiber of $D$, i.e. it can be identified with a smooth function $F_0: M \rightarrow \C$.
\end{remark} 
Let $\mathcal{H}_m$ be the space of $L^2$-bounded holomorphic sections of $L^m$ defined in the introduction.
It is easy to see (see e.g. \cite{Zel} for the compact case) that for $m \geq 1$ there is a unitary equivalence $
 \mathcal{H}_m\rightarrow \mathcal H_m^2(X)$ which sends a section $s \in  \mathcal{H}_m$  to the function $\hat s \in \mathcal H_m^2(X)$ defined by
$$\hat s(\lambda) = \lambda^m(s)$$
for every $\lambda \in L^*$. Then, if we take an orthonormal basis $s_{1}, \dots, s_{{N(m)}}$ of $\mathcal{H}_m$ then $\hat s_{1}, \dots, \hat s_{{N(m)}}$ is an orthonormal basis of $\mathcal H_m^2(X)$.

Thus, for $m\geq 1$ we have 
$${\mathcal S}_m(v) = \sum_{j=1}^{N(m)} |\hat s_{j}(v)|^2= \sum_{j=1}^{N(m)} \left| (\sqrt{h^*(v,v)})^m \hat s_{j}\left( x  \right) \right|^2 = (h^*(v,v))^m \sum_{j=1}^{N(m)}  \left|  \hat s_{j}(x) \right|^2,$$
where we  denote by $x = \frac{v}{\sqrt{h^*(v,v)}}$.
Thus, since $\sum_{j=1}^{N(m)}  \left|  \hat s_{j}(x) \right|^2$ is  the Kempf distortion function 
$T_{mg}(\pi(x)) = T_{mg}(\pi(v))$ (where $\pi: L^* \rightarrow M$ is the bundle projection), we have
$${\mathcal S}_m(v) = (h^*(v,v))^m T_{mg}(\pi(v)).$$
Combining this with (\ref{S=sumS_kconf}), we can write\footnote{Equation \eqref{szegokempf}  extends  to the noncompact setting the analogous equation proved in \cite{ALZ}  for the  compact case.}
\begin{equation}\label{szegokempf} 
{\mathcal S}(v) = \sum_{m=0}^{\infty} (h^*(v,v))^m T_{mg}(p)
\end{equation}
for $v \in D$, where $p = \pi(v)$ and $T_0(p)$ is the function $F_0$ in Remark \ref{Remark k=0} above.
\begin{lem}\label{boundedszego}
Let $(M, g)$ be a \K\ manifold such that \K\ form $\omega$ associated to $g$ is integral.
Assume that the associated Kempf distortion function $T_{mg}$ 
admits a TYCZ expansion.
Let $p_{0} \in M$
and define
\begin{equation}\label{defphi}
\phi \left(t\right) =\sum _{m=0}^{\infty}  (1-t)^{n+1}t^{m}\,T_{mg}(p_0)
\end{equation}
Then the map $t \mapsto \phi^{(h)}(t):=\frac{\de^{h} }{\de t^{h}}\left(\phi(t)\right)$  is bounded on  $(0,1)$ for all $h\geq 0$.
\end{lem}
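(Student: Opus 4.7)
My plan is to use the identity \eqref{szegokempf} to identify $\phi(t)$ with the Szeg\"o kernel of $D$ restricted to a fibre (multiplied by $(1-t)^{n+1}$), and then to apply the Fefferman/Boutet de Monvel--Sj\"ostrand parametrix \eqref{fefferman} to produce a closed-form expression from which the bounds on every derivative can be read off.

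Concretely, I would fix $p_0$ and pick a vector $v$ in the fibre $D_{p_0}\subset D$ with $h^*(v,v) = t \in [0,1)$. Since $T_{mg}(p_0)$ is independent of the choice of $v$ in the fibre, \eqref{szegokempf} reads $\mathcal S(v) = \sum_{m=0}^\infty t^m T_{mg}(p_0)$ (with the $m=0$ term corresponding to the function $F_0$ of Remark \ref{Remark k=0}), and hence $\phi(t) = (1-t)^{n+1}\mathcal S(v)$. Plugging in \eqref{fefferman} with $\rho(v) = 1-t$ gives
\[
\mathcal S(v) = \frac{a(v)}{(1-t)^{n+1}} + b(v)\log(1-t),\qquad a,b\in C^\infty(\bar D).
\]
Now $\mathcal S(v)$, $(1-t)^{-n-1}$, and $\log(1-t)$ are all $S^1$-invariant on $D_{p_0}$ (the invariance of $\mathcal S$ follows from the charge decomposition \eqref{decom}--\eqref{decompH2k}), so averaging $a,b$ under the $S^1$-action produces smooth functions $\tilde a(t),\tilde b(t)\in C^\infty([0,1])$ depending only on $t$, with
\[
\phi(t) = \tilde a(t) + \tilde b(t)(1-t)^{n+1}\log(1-t).
\]

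To conclude, the derivatives of $\tilde a\in C^\infty([0,1])$ are bounded on the compact interval, hence on $(0,1)$. For the second summand, writing $s=1-t$ and applying Leibniz's rule reduces the task to bounding $\tilde b^{(k)}(t)$ times derivatives of the elementary block $g(t):= s^{n+1}\log s$; an induction shows $g^{(h)}(t) = A_h s^{n+1-h}\log s + B_h s^{n+1-h}$, which is bounded on $[0,1]$ for $1\le h\le n$.

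The principal obstacle I expect is the case $h>n$: then $g^{(h)}$ develops genuine logarithmic singularities at $t=1$, so controlling $\phi^{(h)}$ requires coupling the derivatives of $\tilde b$ at $t=1$ with those of $g$ to produce cancellation. This step must exploit the fine boundary regularity of the Boutet de Monvel--Sj\"ostrand symbol, which should supply the extra factors of $s$ (via the Taylor expansion of $\tilde b$ at $t=1$) needed to absorb the $\log s$ singularities of $g^{(h)}$ and yield the uniform boundedness of $\phi^{(h)}$ on $(0,1)$.
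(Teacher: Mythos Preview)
Your approach is quite different from the paper's. The paper never invokes the Szeg\"o kernel or \eqref{fefferman} to prove this lemma; it argues directly from the TYCZ estimate \eqref{rest} at order $k=0$, which gives $a_0 m^n - C_{0,0}\, m^{n-1}\le T_{mg}(p_0)\le a_0 m^n + C_{0,0}\, m^{n-1}$, and then reduces matters to the boundedness of the explicit sums $\varphi_k(t)=\sum_{m\ge 1}\bigl((1-t)^{n+1}t^m\bigr)^{(h)}m^k$ for $k=n-1,n$. These are handled by the Eulerian-type identity $(1-t)^{n+1}\sum_{m\ge 0} m^k t^m = q_k(t)(1-t)^{n-k}$ with $q_k$ a polynomial of degree $k$, so that each $\varphi_k$ is the $h$-th derivative of a polynomial and hence bounded on $(0,1)$. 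No boundary analysis of $D$ enters at all; the only input is the growth bound $T_{mg}(p_0)=O(m^n)$ coming from \eqref{rest}.

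The obstacle you flag for $h>n$ is genuine, and the remedy you propose does not exist. The Boutet de Monvel--Sj\"ostrand expansion \eqref{fefferman} gives \emph{no} vanishing of $b$ on $X=\partial D$: the boundary value $b|_X$ is precisely the log-term, a nontrivial local invariant that is generically nonzero (this is exactly the subject of the Lu--Tian theorem and of the conjecture discussed in the introduction). Hence $\tilde b(1)$ need not vanish, and then in the Leibniz expansion of $\bigl(\tilde b(t)(1-t)^{n+1}\log(1-t)\bigr)^{(n+1)}$ the only term carrying a logarithm is $\tilde b(t)\cdot(-1)^{n+1}(n+1)!\log(1-t)$, with nothing to cancel it. Your argument therefore cannot close for $h\ge n+1$ without assuming that $\tilde b$ vanishes to infinite order at $t=1$---far stronger than anything \eqref{fefferman} provides, and in the finite-TYCZ setting essentially the \emph{conclusion} the paper is working toward (via this very lemma, in the proof of Theorem~\ref{mainteor}) rather than an available hypothesis.
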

\proof
One has
\begin{equation*}
\begin{split}
\phi ^{(h)} \left(t\right)= &\sum _{m=0}^{\infty}  \left((1-t)^{n+1}t^{m} \right)^{(h)} T_{mg}(p_{0}) \\ 
=& \sum _{m=1}^{\infty}  \left((1-t)^{n+1}t^{m} \right)^{(h)} T_{mg}(p_{0})
+((1-t)^{n+1})^{(h)} T_{0}(p_{0})
\end{split}
\end{equation*}

On the other hand, by   \eqref{rest}, we have
$$
-{C_{0,0}}\,{m^{n-1}} +a_{0}\,  m^{n}
\leq T_{mg}(p_{0})
\leq {C_{0,0}}\,{m^{n-1}} + a_{0}\,  m^{n}, \ m\geq 1.
$$
Hence to show that $ \phi ^{(h)} \left(t\right) $ is bounded one needs to verify that the two functions
$$
\varphi_{k}(t)=\sum _{m=1}^{\infty}  \left((1-t)^{n+1}t^{m} \right)^{(h)}  {m^{k}},\ \ k=n-1,n
$$
are bounded on $(0,1)$. This easily follows since
\begin{equation*}
 \left(  \sum _{m=0}^{\infty}  (1-t)^{n+1}t^{m}   {m^{k}}\right)^{(h)} 
= \left(  q_{k}(t)(1-t)^{n-k}  \right)^{(h)},\  0<t<1,
\end{equation*}
where  $q_{k}(t)$ is the polynomial of degree $k$ in $t$
such that 
\begin{equation}\label{pol}
q_{k}(t)=(1-t)^{k+1}\sum _{m=0}^{\infty} t^{m}m^{k}, \ 0<t<1.
\end{equation}
\endproof
\begin{lem}\label{lemmatec}
Let $k_0$ be a positive integer and $h$ a natural number.
Consider the function
 \begin{equation}\label{psiht}
 \psi_{h}(t)=\left((1-t)^{n+1} \sum _{m=1}^{\infty}  \frac{t^{m}}{m^{k_{0}+h}} \right)^{(n+k_{0})},\ 0<t<1.
\end{equation}
Then $\psi_h(t)=O(1)$ in $[0, 1]$  if and only if $h\neq 0$.
 \end{lem}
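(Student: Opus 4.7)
My plan is to bypass polylogarithm machinery and analyse the Taylor coefficients of $\psi_h$ directly. Expanding $(1-t)^{n+1}$ binomially and regrouping powers of $t$ gives
\begin{equation*}
(1-t)^{n+1}\sum_{m\geq 1}\frac{t^m}{m^{k_0+h}}=\sum_{\ell\geq 1}a_\ell\, t^\ell,\qquad a_\ell=\sum_{j=0}^{\min(n+1,\ell-1)}\binom{n+1}{j}\frac{(-1)^j}{(\ell-j)^{k_0+h}},
\end{equation*}
and the only real work will be pinning down the decay of $a_\ell$. My plan is to Taylor-expand $(\ell-j)^{-(k_0+h)}=\ell^{-(k_0+h)}(1-j/\ell)^{-(k_0+h)}$ in powers of $1/\ell$ and then invoke the classical finite-difference identity
\begin{equation*}
\sum_{j=0}^{n+1}\binom{n+1}{j}(-1)^j j^k=\begin{cases}0, & k<n+1,\\ (-1)^{n+1}(n+1)!, & k=n+1,\end{cases}
\end{equation*}
which annihilates the first $n+1$ orders and isolates the leading surviving contribution
\begin{equation*}
a_\ell=\frac{C_0}{\ell^{n+k_0+h+1}}+O\!\left(\frac{1}{\ell^{n+k_0+h+2}}\right),\qquad C_0:=(-1)^{n+1}(n+1)!\binom{n+k_0+h}{n+1}\neq 0.
\end{equation*}

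Differentiating the power series term by term then produces $\psi_h(t)=\sum_{m\geq 0}b_m\, t^m$ with
\begin{equation*}
b_m=(m+1)(m+2)\cdots(m+n+k_0)\,a_{m+n+k_0}\sim \frac{C_0}{m^{h+1}}\qquad(m\to\infty).
\end{equation*}
If $h\geq 1$ then $\sum|b_m|<\infty$, so the series converges uniformly on $[0,1]$ and $\psi_h$ is bounded (indeed continuous up to $t=1$). If $h=0$, I would write $b_m=C_0/m+r_m$ with $r_m=O(1/m^2)$ to obtain
\begin{equation*}
\psi_0(t)=-C_0\log(1-t)+b_0+\sum_{m\geq 1}r_m\, t^m,
\end{equation*}
where the sum on the right is bounded on $[0,1]$ while the logarithmic term blows up as $t\to 1^-$. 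Since $C_0\neq 0$ no cancellation is possible, and $\psi_0$ is unbounded; this yields the converse.

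The main obstacle is therefore the asymptotic $a_\ell\sim C_0/\ell^{n+k_0+h+1}$: the binomial cancellations must be tracked carefully through $n+1$ orders to confirm that the leading surviving coefficient $C_0$ is nonzero, since otherwise the whole argument would slip a power of $m$ and the dichotomy at $h=0$ would not appear at the right place. Once this asymptotic is in hand, both directions of the ``iff'' follow from routine power-series estimates.
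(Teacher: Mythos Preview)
Your argument is correct and takes a genuinely different route from the paper's. The paper applies the Leibniz rule directly to the $(n+k_0)$-th derivative, writing
\[
\psi_h(t)=\sum_{l=k_0-1}^{n+k_0}\sum_{s=0}^{l} b_{l,s}\,(1-t)^{1-k_0+l}\sum_{m}\frac{t^{m-l}}{m^{k_0+h-s}},
\]
and then runs a four-case analysis on the indices $(l,s)$ to show that every summand is bounded as $t\to 1^-$ except precisely the term with $h=0$, $s=l=k_0-1$, which produces $\log(1-t)$. Your approach instead multiplies out the series first and recognises the Taylor coefficients $a_\ell$ (for $\ell>n+1$) as the $(n+1)$-th forward difference of $x\mapsto x^{-(k_0+h)}$; the identity $\sum_{j=0}^{n+1}(-1)^j\binom{n+1}{j}j^k=0$ for $k\leq n$ (equivalently, the mean-value form of the $(n+1)$-th difference) then yields the sharp asymptotic $a_\ell=C_0\,\ell^{-(n+k_0+h+1)}+O(\ell^{-(n+k_0+h+2)})$ with $C_0=(-1)^{n+1}(k_0+h)(k_0+h+1)\cdots(k_0+h+n)\neq 0$. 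After differentiating $n+k_0$ times the coefficients become $b_m\sim C_0/m^{h+1}$, and the dichotomy between $h\geq 1$ (summable, hence bounded) and $h=0$ (a $-C_0\log(1-t)$ singularity) falls out immediately.

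Your method is more conceptual: it explains \emph{why} the threshold sits exactly at $h=0$ --- the $(n+1)$ factors of $(1-t)^{n+1}$ kill $n+1$ orders, and the $n+k_0$ derivatives restore $n+k_0$ of them. The paper's argument, by contrast, is entirely explicit and never needs the error estimate $O(\ell^{-(n+k_0+h+2)})$, since boundedness of each piece is verified by inspection; that is the only place where your sketch requires a little extra care, and it follows e.g.\ from $\Delta^{n+1}f(\ell-n-1)=f^{(n+1)}(\xi)$ with $\xi\in(\ell-n-1,\ell)$ together with one more application of the mean-value theorem.
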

\begin{proof}
Observe that 
\begin{align}
 \psi_{h}(t)&
 =  \sum _{l=k_0-1}^{n+k_{0}} c_l \left(1-t\right) ^{1-k_{0}+l}  \sum _{m=1}^{\infty}  \left(\frac{t^{m}}{m^{k_{0}+h}} \right)^{(l)} \nonumber \\
&= \sum _{l=k_{0}-1}^{n+k_{0}}\left(1-t\right) ^{1-k_{0}+l}\sum _{{m=l \text{ if } l\geq1 \atop m=1 \text{ if } l=0}}^{\infty} \left(b_{l,l}m^{l}+b_{l,l-1}m^{l-1}+\dots+b_{l,1}m+b_{l,0}\right)\frac{t^{m-l}}{m^{k_{0}+h}} \nonumber \\
& = \sum _{l=k_{0}-1}^{n+k_{0}}\sum _{s=0}^{l}b_{l,s}\sum _{m=l \text{ if } l\geq1 \atop m=1 \text{ if } l=0}^{\infty} \left( \frac{t^{m-l}}{m^{k_{0}+h-s}}\left(1-t\right) ^{1-k_{0}+l} \label{addendo}
\right)
 \end{align}
where $c_l,b_{l,l-1},\dots,b_{l,0},b_{l}$ are suitable real numbers. 
Consider  the series
\begin{equation}\label{Serie666}
F_{k_0, s, h, l}(t)=(1-t) ^{1-k_{0}+l}\sum _{m=l \text{ if } l\geq1 \atop m=1 \text{ if } l=0}^{\infty} \frac{t^{m-l}}{m^{k_{0}+h-s}}
\end{equation}
for $k_0-1 \leq l\leq n+k_{0}$ and $0 \leq s \leq l$.
Notice that for  $l=0$  (and hence $s=0$ and $k_0=1$) (\ref{Serie666}) reads as
$\sum_{m=1}^{\infty}\frac{t^m}{m^{h+1}}$
which is bounded for $t\rightarrow 1^-$ if and only if $h>0$.
More generally, we claim that 
(\ref{Serie666}) diverges if and only if $h=0$ and   $s=l=k_{0}-1$.

Indeed for $h=0$ and   $s=l=k_{0}-1$  (\ref{Serie666})   reads as  
 \begin{equation*}
 t^{-k_{0}+1} \sum _{m=k_{0}-1 \text{ if }  k_0\geq 2 \atop m=1 \text{ if } k_0=1}^{\infty}  \frac{t^{m}}{m} =t^{-k_{0}+1}  \left[\log \left(1-t\right) -  \sum _{m=1}^{k_{0}-2} \left( \frac{t^{m}}{m} \right)\right]
 \end{equation*}
and so it  tends to  $-\infty$ for $t \rightarrow 1^-$.
On the other hand for the other values of  the parameters 
one has 
the following case by case analysis which shows
that  (\ref{Serie666})  is bounded for $t \rightarrow 1^-$ 
(we assume $l\geq 1$ by the above considerations).

\noindent
{\bf Case 1.} $s>k_0+h$:

$F_{k_0, s, h, l}(t)=\left(1-t\right) ^{1-k_{0}+l}\sum _{m=l}^{\infty} \frac{t^{m-l}}{m^{k_{0}+h-s}}=(1-t) ^{1-k_{0}+l}  \sum _{\tilde m=0}^{\infty} {t^{\tilde m}}{(\tilde m+l)^{s-k_{0}-h}}$
 
$\ \ \ \ \ \ \ \ \ \ \ \ \ \ =  (1-t)^{l-s} \tilde q_{s-k_{0}-h}(t),$

where $ \tilde q_{s-k_{0}-h}(t)$  is a polynomial of degree $s-k_{0}-h$.

\noindent
{\bf Case 2.} $s=k_0+h$ (and hence $l\geq k_0$):

$F_{k_0, s, h, l}(t)=
\left(1-t\right) ^{1-k_{0}+l}t^{-l}\sum _{m=l}^{\infty} t^{m}=\left(1-t\right) ^{1-k_{0}+l}t^{-l}\left[\frac{1}{1-t}-\sum _{\tilde m=0}^{l-1} t^{\tilde m}\right]$

$\ \ \ \  \ \ \ \ \ \ \ \ \ \  =(1-t) ^{-k_{0}+l}t^{-l}-(1-t) ^{1-k_{0}+l}t^{-l}\sum _{\tilde m=0}^{l-1} t^{\tilde m}$

\noindent
{\bf Case 3.}  $s=k_0+h-1$ (and hence\footnote{Since  $l=k_0-1$ forces   $h=0$ and $s=l=k_0-1$.}
$l> k_0-1$):
$$F_{k_0, s, h, l}(t)=
\left(1-t\right) ^{1-k_{0}+l}t^{-l}\sum _{m=l}^{\infty} \frac{t^m}{m}=
\left(1-t\right) ^{1-k_{0}+l}t^{-l}\left[\log (1-t)-\sum _{\tilde m=1}^{l-1} \frac{t^{\tilde m}}{\tilde m}\right].
$$

\noindent
{\bf Case 4.}  $s\leq k_0+h-2$:
$$F_{k_0, s, h, l}(t)=
\left(1-t\right) ^{1-k_{0}+l}\sum _{m=l}^{\infty} \frac{t^{m-l}}{m^{k_{0}+h-s}}\leq
\left(1-t\right) ^{1-k_{0}+l}\sum _{m=l}^{\infty} \frac{t^{m-l}}{m^{2}}.
$$
\end{proof}

We can now  prove  Theorem \ref{mainteor}.
\begin{proof}[Proof of Theorem \ref{mainteor}]
We first prove that \eqref{TPoly} forces $T_{mg}(p)$  to be a polynomial of degree $n$. By (\ref{rest}) for $k=0$ and 
\eqref{TPoly} one gets
$$\left| \sum_{h=r}^{s} f_{h} \left(p\right) m^{h-n} - a_{0}\right|\leq {C_{0,0}}\,{m^{-1}},$$
and taking $m \rightarrow \infty$ one deduces $f_{n+1}= f_{n+2}= \dots f_{s}= 0$ and $f_{n}= a_{0}=1$.
It remains to show that  $r \geq 0$. Assume by a contradiction that $r<0$. Then 
 the function $\phi(t)$ given in (\ref{defphi}) decomposes as

\begin{equation}\label{decompPhi}
\phi(t)  = (1-t)^{n+1} T_0(p) + g_+(t) + g_-(t)
\end{equation}

where

$$g_+(t) := (1-t)^{n+1}\sum _{m=1}^{\infty}   t^{m} \left(f_{n}(p) m^n+\dots+f_{0}(p)\right)$$
$$g_-(t) :=  (1-t)^{n+1}\sum _{m=1}^{\infty}   t^{m} \left(f_{-1}(p) \frac{1}{m}+\dots+f_{r}(p)\frac{1}{m^{|r|}}\right).$$
and  there exists a positive integer  $k_{0}$  such that $f_{-1}(p)=f_{-2}(p)=\dots=f_{-k_{0}+1}(p)=0$ and $f_{-k_{0}}(p)\neq 0$. 
Notice that 
$$g_-^{(n+k_{0})}(t)=  \sum_{h=0}^{|r|-k_{0}}f_{-k_{0}-h} \left(p\right) \psi_{h} (t),$$
(where $\psi_{h} (t)$ is defined by \eqref{psiht})
and, by  Lemma \ref{lemmatec}, 
\begin{equation}\label{contr1}
\lim_{t \rightarrow 1^-} g_-^{(n+k_0)}(t)=-\infty.
\end{equation}
By combining \eqref{decompPhi},  \eqref{contr1} and  the fact that 
$g_+(t)$ has bounded derivatives of all orders (being $g_+(t)=\sum_{k=0}^{n}f_{k} \left(x\right) q_{k}(t) (1-t)^{n-k}$,  where $q_{k}(t)$ is the  polynomial given by \eqref{pol}) we deduce that $\phi^{(n+k_0)}(t)$ is unbounded in contrast with Lemma \ref{boundedszego}. 

Let now $p_{0} \in M$ and  $e:U \rightarrow L^{*}$ be a local trivialization on a neighborhood of $p_{0}$.  Consider  the coordinate system
$$
v(t,\theta, p) = \sqrt{\frac{t}{h(p)}} e^{i \theta}e(p),
$$
where  $h(p)=h^{*}(e(p),e(p))$ (and hence $h^{*} \left(v(t,\theta, p),\,v(t,\theta, p)\right) =t$) 
By \eqref{szegokempf} and $h^{*} \left(v(t,\theta, p),\,v(t,\theta, p)\right) =t$, one has
\begin{equation}
\phi \left(t\right) = \sum _{m=0}^{\infty}  (1-t)^{n+1}t^{m}\,T_{mg}(p_0)=\rho \left(t\right)^{n+1} {\mathcal S}(v(t,\theta, p_0)) 
\end{equation}
Therefore, by inserting  $g_-(t) = 0$ in (\ref{decompPhi}) one obtains that  $\phi:D \rightarrow \R$ is the restriction of a smooth function on $\ov {D}$ and  by (\ref{fefferman}) one deduces that the log-term of the Szeg\"{o} kernel $\S(v)$ of $D$ must vanish, concluding the proof of the theorem.
\end{proof}

\section{Radial projectively induced cscK metrics with $a_3=0$}\label{sec2}
We first recall the classification of radial \K \ metrics with constant scalar curvature  proved in \cite{LSZ}. Let  $U\subset\C^n$  be a complex domain (not necessarily containing the origin of $\C^n$) 
endowed with a \K\ form $\omega =\frac{i}{2\pi}\partial\bar\partial \Phi$
with radial  potential $\Phi:U\rightarrow \R$, i.e.
$$\Phi(z) = f(r), \ \ r=|z|^2=|z_1|^2+\cdots +|z_n|^2 \in \tilde U:= \{ r = |z|^2, z \in U \}.$$

These metrics can be studied by rewriting everything in terms of the function $\psi(y)$ introduced in the proof of Theorem 2.1 in \cite{LSZ}, i.e. more precisely $F(t)=f(e^t)$, $y = F'(t)$, $\psi(y) = F''(t)$.

In particular, by assuming that $g$ is  cscK,  one shows after a long but straightforward calculation (see the proof of Theorem 2.1 in \cite{LSZ}) that $\psi$ has the form

\begin{equation}\label{psia1const}
\psi(y) = Ay^2 + y  + \frac{B}{y^{n-2}} + \frac{C}{y^{n-1}},
\end{equation}
where  $A$, $B$ and $C$ are constants and the scalar curvature is equal to  $-An(n+1)$. 

\begin{remark}\label{remarkorigin}\rm
Assume  $n=2$.  If we set 
$z=^{\  t}\!\!(z_1, z_2)$ then one easily sees that the matrix of the metric $g$ (still denoted by $g$) reads as:
$$g=\frac{F^{''}-F'}{e^{2t}}z {^{t}\bar z}+\frac{F'}{e^t}I,$$
where $I$ is the $2\times 2$ identity matrix, whose (positive) eigenvalues are $\frac{F'}{e^t}$ and 
$\frac{F''}{e^t}$. So, if we further assume that $\Phi$ is defined at the origin, we get
$$\lim_{t\rightarrow-\infty}F'=\lim_{t\rightarrow-\infty} F''=0,$$
forcing $B=C=0$ in  (\ref{psia1const}). In this case the solution of $\psi(y) = Ay^2 + y$ are the flat, the Fubini-Study and the hyperbolic metric if $A=0$, $A<0$ and $A>0$, respectively (cfr. \eqref{A=01}, \eqref{An0FS} and \eqref{An0hyp} below).
\end{remark}

In the proof of Theorem 1.1 in \cite{LSZ} it is shown that $n=2$ and $a_3 = 0$ (where $a_3$ is the third coefficient of TYCZ expansion of the Kempf distortion function) if and only if $C = 0$, so (\ref{psia1const}) reduces under these assumptions to 

\begin{equation}\label{psia1constRed}
\psi(y) = Ay^2 + y  + B
\end{equation}

\begin{remark}\label{a2vanish}\rm
For a cscK radial metric the condition $a_3=0$  is equivalent to $a_2=0$ (see \cite{LSZ}).
This fact will be used in the proof of Theorem \ref{class2}.
\end{remark}

Hence the classification of radial cscK metrics with $a_3=0$ reduces to integrating equation (\ref{psia1constRed}) (recall that $\psi = y'$, where the derivative is meant with respect to $t = \log r$) in the cases $A=0$ and $A \neq 0$. In the latter  we further distinguish the three cases where the equation $Ay^2 + y  + B =0$ has no real solutions, only one real solution or two real solutions, and the sign of these solutions. Let us briefly recall the result of such classification. In order to keep the same notation used in \cite{LSZ}, we will rewrite $\psi$ in terms of real parameters $\lambda, \mu, \xi, \zeta > 0$, $0<\zeta<1$, $\kappa \in \R$ (the exact relation with $A,B$ is not necessary for our purposes).

When $a_1 =0$ (namely vanishing scalar curvature or, equivalently, $A=0$) we have the following three cases:

\begin{equation}\label{A=01}
\psi (y)=   y  
\end{equation}
which corresponds to the flat metric $g_0$ on $U\subseteq\C^2$;
\begin{equation}\label{A=02}
\psi(y) =   y - \lambda  
\end{equation}
which integrates as $F'(t) = \mu e^t + \lambda$, is defined on $r = e^t > 0$ and is (a multiple of) the Simanca metric (\ref{simancaprimavolta}) on $U\subseteq\C^2\setminus\{0\}$;
\begin{equation}\label{A=03}
\psi (y)=   y + \lambda  
\end{equation}
which integrates as $F'(t) = \mu e^t - \lambda$ and is defined on $r = e^t > \frac{\lambda}{\mu}$. Notice that $F' \rightarrow 0$ when $r \rightarrow \frac{\lambda}{\mu}$.

When $a_1 \neq 0$  (equivalently, $A \neq 0$) we have the following eight cases [\eqref{An0FS}--\eqref{An0viii}]:
\begin{equation}\label{An0FS}
\psi (y)=   \frac{1}{\mu}y(\mu - y)  
\end{equation}
which integrates as $y = F'(t) = \frac{\mu e^t}{1 + e^t}$ and corresponds to the multiple  $\mu \omega_{FS} = \mu i \partial \bar \partial \log(1 + |z|^2)$ of the Fubini-Study metric on $U\subseteq\C^2\subset \C P^2$;
\begin{equation}\label{An0hyp}
\psi(y) =   \frac{1}{\mu}y(\mu + y)  
\end{equation}
which integrates as $y = F'(t) = \frac{\mu e^t}{1 - e^t}$ and corresponds to the multiple  $\mu \omega_{hyp} = -\mu i \partial \bar \partial \log(1 - |z|^2)$ of the hyperbolic metric on $U\subseteq\C H^2$;
\begin{equation}\label{An0iii}
\psi(y) =  \left[ \left( \frac{1}{\mu}y  + \frac{1}{2} \right)^2 + \lambda^2 \right]
\end{equation}
which is easily seen to integrate  as $y = F'(t) = \mu \left[ \lambda \tan(\lambda t + \kappa) - \frac{1}{2} \right]$
with maximal interval of definition given by $h \pi + \arctan \left( \frac{1}{2 \lambda} \right) < \lambda t + \kappa < \frac{2h+1}{2} \pi$.
Notice that $F' \rightarrow 0$ when $\lambda t + \kappa \rightarrow h \pi + \arctan \left( \frac{1}{2 \lambda} \right)$;
\begin{equation}\label{An0iv}
\psi(y) = \frac{1}{\mu} \left( y  - \frac{1-\zeta}{2} \right) \left( y  - \frac{1-\zeta}{2} + \mu \right) 
\end{equation}
which is easily seen to integrate  as $y = F'(t) = -\mu \left[ \frac{- \xi \zeta e^{\zeta t}}{1 - \xi e^{\zeta t}} + \frac{1-\zeta}{2} \right]$
with maximal interval of definition given by  $\frac{1-\zeta}{1+\zeta}  < \xi e^{\zeta t} < 1$.
Notice that $F' \rightarrow 0$ when $\xi e^{\zeta t} \rightarrow \frac{1-\zeta}{1+\zeta}$;
\begin{equation}\label{An0v}
\psi (y)= \frac{1}{\mu} \left( y  - \frac{\mu \lambda}{2} \right) \left( y  + \frac{\mu \lambda}{2} + \mu \right) 
\end{equation}
which is easily seen to integrate  as $y = F'(t) = -\mu \left[ \frac{- \xi (\lambda+1) e^{(\lambda+1) t}}{1 - \xi e^{(\lambda+1) t}} - \frac{\lambda}{2} \right]$
with maximal interval of definition given by $0  < \xi e^{(\lambda+1) t} < 1$.
Notice that $\frac{\mu \lambda}{2} < F' < + \infty$:
\begin{equation}\label{An0vi}
\psi (y)= -\frac{1}{(\lambda+1)\mu}\left( y  + \frac{\mu \lambda}{2} \right)   \left( y  - \frac{\mu \lambda}{2} - \mu \right) 
\end{equation}
which is easily seen to integrate  as $y = F'(t) = \mu \left[ \frac{- \xi (\lambda+1) e^{-(\lambda+1) t}}{1 + \xi e^{-(\lambda+1) t}} + \frac{2 + \lambda}{2} \right]$
with maximal interval of definition given by $\xi e^{(\lambda+1) t} > \frac{\lambda}{2 + \lambda}$.
Notice that $F' \rightarrow 0$ when $\xi e^{(\lambda+1) t} \rightarrow \frac{\lambda}{2+\lambda}$;
\begin{equation}\label{An0vii}
\psi (y)= -\frac{1}{\mu}\left( y  - \frac{\mu (1 + \zeta)}{2} \right)   \left( y  - \frac{\mu (1-\zeta)}{2} \right) 
\end{equation}
which is easily seen to integrate  as $y = F'(t) = \mu \left[ \frac{- \zeta e^{-\zeta t}}{1 + e^{-\zeta t}} + \frac{1 + \zeta}{2} \right]$
with maximal interval of definition $\R \setminus \{0 \}$. Notice that $\frac{\mu (1-\zeta)}{2} < y < \frac{\mu (1+\zeta)}{2}$;
\begin{equation}\label{An0viii}
\psi (y)= \frac{1}{\mu}\left( y  + \frac{\mu}{2} \right)^2  
\end{equation}
which is easily seen to integrate  as $y = F'(t) = \mu \left[ \frac{1}{k - t} - \frac{1}{2} \right]$
with maximal interval of definition $k-2 < t < k$. Notice that $F' \rightarrow 0$ when $t \rightarrow k-2$.

Metrics (\ref{An0iii})-(\ref{An0viii}) correspond respectively to cases 11a, 6, 7, 8, 9, 10a of Theorem 2.1 in \cite{LSZ}.

In \cite{LSZ} the first and the third author together with F. Salis proved that the flat metric and the Simanca metric, i.e. the cases (\ref{A=01}) and (\ref{A=02}) above, are the only radial projectively induced metrics with $a_1 = a_3 = 0$.
\begin{remark}\label{projind}\rm
For the hyperbolic metric $g_{hyp}$ on $\C H^n$ and the Fubini-Study metric $g_{FS}$ on $\C^n$, i.e. the cases (\ref{An0hyp}) and (\ref{An0FS}) above, one has that $\mu g_{hyp}$ admits an {\em injective} K\"ahler immersion into $\C P^{\infty}$ for any $\mu >0$, while $\mu g_{FS}$ admits an {\em injective}   K\"ahler immersion into $\C P^{N}$ if and only if $\mu$ is an integer (the reader is referred to \cite{Cal} for an explicit descriptions of these maps).
\end{remark}
Among the other cases above, it is easy to see that the metric (\ref{An0v})  is projectivel{y induced provided $\lambda \in \z$ and $\frac{\lambda \mu}{2} \in \z$. Indeed, an explicit potential of this metric is given by 
\begin{equation}\label{explmetr7}
\hat\Phi=   \log \frac{(|z_1|^2 + |z_2|^2)^{\frac{\mu \lambda}{2}}}{[1 - \xi (|z_1|^2 + |z_2|^2)^{(\lambda+1)}]^{\mu}} 
\end{equation}

and, by using $\frac{1}{(1-x)^{\mu}} = \sum_{i=0}^{\infty} \frac{\mu(\mu+1) \cdots (\mu + i -1)}{i!} x^i$ one has

\begin{equation*}
\begin{split}
e^{\hat\Phi} = &(|z_1|^2 + |z_2|^2)^{\frac{\mu \lambda}{2}} \sum_{i=0}^{\infty} \frac{\mu(\mu+1) \cdots (\mu + i -1)}{i!} \xi^i (|z_1|^2 + |z_2|^2)^{(\lambda+1)i} \\ 
=&  \sum_{i=0}^{\infty} \frac{\mu(\mu+1) \cdots (\mu + i -1)}{i!} \xi^i (|z_1|^2 + |z_2|^2)^{(\lambda+1)i + \frac{\mu \lambda}{2}} \\
=&\sum_{i=0}^{\infty} \sum_{j=0}^{(\lambda+1)i + \frac{\mu \lambda}{2}} \frac{\mu(\mu+1) \cdots (\mu + i -1)}{i!} \xi^i {(\lambda+1)i + \frac{\mu \lambda}{2} \choose j} |z_1|^{2j} |z_2|^{2(\lambda+1)i + \mu \lambda - 2j}.
\end{split}
\end{equation*}
Then,
\begin{equation}\label{projemb}
(z_1, z_2) \mapsto \left[ \cdots , \sqrt{ \frac{\mu(\mu+1) \cdots (\mu + i -1)}{i!} \xi^i {(\lambda+1)i + \frac{\mu \lambda}{2} \choose j}} z_1^{j} z_2^{k}, \cdots\right]
\end{equation}
for $i=0, \dots, \infty$ and  $j+k=(\lambda+1)i + \frac{\mu \lambda}{2}$
gives the desired projective immersion.

\begin{remark}
\rm Notice that the metric given by potential (\ref{explmetr7}) is not Einstein for any values of $\lambda, \mu, \xi > 0$. Indeed, for a \K\ metric $g$ on a 2-dimensional manifold with \K\ form $\omega = \frac{i}{2\pi} \partial \bar \partial \Phi$  given by a radial potential $\Phi(z) = f(r), r= |z_1|^2 + |z_2|^2$, we have

$$\det(g) = \det \left(  \begin{array}{cc}
f' + f'' \cdot |z_1|^2 & f'' \bar z_1 z_2   \\
f'' \bar z_2 z_1   & f' + f'' \cdot |z_2|^2 
\end{array}\right) = f'^2 + f' f'' r = f' (r f')'.$$

Then, after a straightforward computation with $\Phi$ given by (\ref{explmetr7}) we get

\begin{equation}\label{detg}
\det(g) = \frac{\xi \mu^2 (\lambda + 1)^2}{2} \frac{r^{\lambda-1}[\lambda + \xi(\lambda + 2) r^{\lambda+1}]}{(1 - \xi r^{\lambda+1})^3}
\end{equation}
and one immediately sees that the metric is not Einstein by comparing
$\log \det(g) $ with (\ref{explmetr7}).
\end{remark}

The following proposition, interesting on its own sake,  shows that the only radial cscK projectively induced metrics with $a_3=0$ are those just described. It could be interesting to classify all the radial projectively induced cscK metrics without the assumption of the vanishing of  $a_3$ (the reader is referred to \cite{LSZconj} for the classification of  radial  projectively induced Ricci flat \K\ metrics). 
\begin{prop}\label{radprojindcscka3=0}
 Let  $U\subset\C^n$  be a complex domain on which is defined a radial \K \ metric $g$  given by a radial  potential $\Phi:U\rightarrow \R$. Assume that $g$ is a cscK metric and $a_3=0$. Then $g$ is projectively induced if and 
 only if we are in the cases (\ref{A=01}), (\ref{A=02}) with $\lambda \in \z$, (\ref{An0FS}) with $\mu \in \z$, (\ref{An0hyp}) for any $\mu$  and (\ref{An0v}) with $\lambda, \frac{\lambda \mu}{2} \in \z$, of the above classification.
\end{prop}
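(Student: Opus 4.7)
The plan is to invoke Calabi's criterion for projective inducedness: a real-analytic K\"ahler metric with potential $\Phi$ normalized so that $\Phi(p_0)=0=d\Phi(p_0)$ at some base point $p_0$ is projectively induced into $\C P^N$ ($N\leq\infty$) if and only if the matrix of Taylor coefficients of $e^{\Phi(z,\bar w)}-1$ at $p_0$ is positive semi-definite of rank at most $N$. For a radial potential $\Phi(z)=f(r)$, with $r=|z|^2$, any two radial potentials of the same metric differ by an additive constant (since every radial pluriharmonic function is constant), so the criterion reduces to whether, up to a positive multiplicative scalar, $e^{f}$ admits a convergent expansion in $r$ with nonnegative coefficients about an admissible base point.

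For the ``if'' direction: case \eqref{A=01} is flat and trivially induced; case \eqref{A=02} with $\lambda\in\z$ is the Simanca metric, treated in \cite{LSZ}; cases \eqref{An0FS} and \eqref{An0hyp} are the classical Veronese-type and hyperbolic immersions, requiring respectively $\mu\in\z$ and any $\mu>0$ (cf.\ Remark \ref{projind}); and case \eqref{An0v} with $\lambda,\,\lambda\mu/2\in\z$ is induced by the explicit holomorphic map \eqref{projemb} constructed above.

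For the ``only if'' direction I need to rule out the remaining cases: \eqref{A=03} is already excluded in \cite{LSZ}, and I must also handle \eqref{An0iii}, \eqref{An0iv}, \eqref{An0vi}, \eqref{An0vii} and \eqref{An0viii}. In each of these, integration of the ODE for $\psi(y)$ yields $e^{\Phi}$ as an explicit product of algebraic and transcendental factors in $r$. Cases \eqref{An0iii} and \eqref{An0viii} produce $\log r$ inside a cosine or a logarithm (namely $|\cos(\lambda\log r+\kappa)|^{-\mu}r^{-\mu/2}$ and $|k-\log r|^{-\mu}r^{-\mu/2}$), which immediately precludes any convergent expansion of $e^{\Phi}$ in nonnegative integer powers of $r$. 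Cases \eqref{An0iv}, \eqref{An0vi} and \eqref{An0vii} instead yield irrational powers of $r$ with exponent depending on $\zeta\in(0,1)$ or on a non-integer parameter; by Taylor expanding $e^{\Phi(z,\bar w)}$ around a real base point inside the admissible annular domain, one sees that Puiseux terms appear that force coefficients of the wrong sign, thereby violating Calabi's positivity condition.

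The hardest step is this last one in the generic annular cases: because the potential is not defined at the origin, the expansion must be carried out off-center, and one must moreover rule out the possibility of absorbing the offending factor $r^{\alpha}$ with $\alpha\notin\z$ into a pluriharmonic modification of the potential (equivalently, into a holomorphic change of local frame of the pulled-back hyperplane bundle). The decisive observation here is precisely that any pluriharmonic modification of a radial potential must itself be radial and hence constant, so the irrational-exponent obstruction is genuinely rigid and cannot be circumvented.
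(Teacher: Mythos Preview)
Your ``only if'' direction has a genuine gap in how you invoke Calabi's criterion. The criterion at a base point $p_0$ concerns the Taylor expansion of $e^{D_{p_0}(z)}$ in $(z-p_0)$ and $(\bar z-\bar p_0)$, where $D_{p_0}$ is the diastasis; it does \emph{not} reduce to an expansion of $e^{f}$ in powers of $r$ unless $p_0$ is the origin, which in the annular cases is not available. In particular, your claim that ``Puiseux terms appear'' in the off-center Taylor expansion is simply false: $r^{\alpha}$ is real-analytic in $r$ away from $r=0$ for any real $\alpha$, so its Taylor series about $r_0>0$ has only integer powers of $(r-r_0)$. Likewise, the argument that \eqref{An0iii} and \eqref{An0viii} ``preclude any expansion in nonnegative integer powers of $r$'' is an obstruction only at $r=0$, which lies outside $U$. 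Your ``decisive observation'' that pluriharmonic modifications of a radial potential must be radial is also not what is at stake: the diastasis is canonically determined, so there is nothing to absorb; the difficulty is computing its coefficient matrix at $p_0\neq 0$. Finally, you have omitted the parameter-restricted cases: \eqref{An0v} when $\lambda\notin\z$ or $\frac{\lambda\mu}{2}\notin\z$, and \eqref{An0vii} when both roots of $\psi$ happen to be integers, must still be excluded and do not fall to any of your arguments.

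The paper's route avoids all of this by working not with Calabi's criterion directly at an off-center point, but with the derived necessary condition of Lemma~\ref{LemmaLSZ}: if $g$ is projectively induced then $\frac{d^h e^{f}}{dr^h}\geq 0$ for every $h$ and every $r\in\tilde U$. This is a pointwise derivative test in the single variable $r$, valid throughout the annulus, and it is exactly the bridge you are missing. From it the paper extracts two consequences: if $y=F'(t)\to 0$ at the boundary then $\psi(0)=B=0$ (Lemma~\ref{lemmaprimocriterio}), which disposes of \eqref{A=03}, \eqref{An0iii}, \eqref{An0iv}, \eqref{An0vi}, \eqref{An0viii} in one stroke; and any positive boundary root $y_0$ of $\psi$ must be an integer (Lemma~\ref{radiceintera}), which forces the integrality constraints in \eqref{An0v} and \eqref{An0vii}. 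The residual subcases --- \eqref{An0vii} with integer roots, and \eqref{An0v} with $\frac{\lambda\mu}{2}\in\z$ but $\lambda\notin\z$ --- are then eliminated by an explicit computation of a single negative derivative $\frac{d^{k_0}e^f}{dr^{k_0}}$ as $r\to 0^+$.
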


In order to prove the proposition we need three  lemmata.
\begin{lem}\label{LemmaLSZ}
Let  $U\subset\C^n$  be a complex domain 
endowed with a \K\ metric $g$  whose associated \K\ form  $\omega =\frac{i}{2\pi}\partial\bar\partial \Phi$
hass radial  potential $\Phi:U\rightarrow \R$, i.e.
$\Phi(z)=f(r)$, $r=|z|^2$.
 If there exist  $r \in\tilde U:= \{ r = |z|^2, z \in U \}$ and $h\in\N$ such that  
\begin{equation}\label{gh}
g_h(r)=\frac{d^{h}e^{f(r)}}{dr^{h}}<0
\end{equation}
then  $g$ is not projectively induced.
\end{lem}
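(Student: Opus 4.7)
The plan is to prove the contrapositive: I will show that if $g$ is projectively induced then $\phi^{(h)}(r) := g_{h}(r) \ge 0$ for every $r \in \tilde U$ and $h \in \N$, where $\phi := e^{f}$. The input is that any K\"ahler immersion $\varphi : U \to \CP^{N}$ lifts on a neighborhood $V$ of each $z^{*} \in U$ to a holomorphic map $\tilde\varphi = (F_{0}, F_{1}, \ldots)$ satisfying
\begin{equation*}
e^{\Phi(z)} \,=\, \sum_{j} |F_{j}(z)|^{2}, \qquad z \in V,
\end{equation*}
after rescaling $\tilde\varphi$ by a nowhere-vanishing holomorphic function. Since $\Phi$ is radial and, in dimension $n \ge 2$, the only radial pluriharmonic functions on $\C^{n}$ are constants, this normalization is compatible with the radial choice of potential.

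The key step will be to slice along a complex direction orthogonal to the base point. Given $r_{0} \in \tilde U$, I pick $z_{0} \in U$ with $|z_{0}|^{2} = r_{0}$ and a unit vector $w \in \C^{n}$ with $\langle w, z_{0} \rangle = 0$ (which exists since $n \ge 2$). Restricting to the holomorphic affine line $z(\lambda) := z_{0} + \lambda w$, with $\lambda \in \C$ small enough that $z(\lambda) \in V$, the cross term $2\Re(\bar\lambda \langle w, z_{0}\rangle)$ vanishes, so $|z(\lambda)|^{2} = r_{0} + |\lambda|^{2}$ exactly. Setting $\tilde F_{j}(\lambda) := F_{j}(z(\lambda))$ and expanding $\tilde F_{j}(\lambda) = \sum_{k \ge 0} c_{j,k}\,\lambda^{k}$, the identity $\phi(r_{0} + |\lambda|^{2}) = \sum_{j} |\tilde F_{j}(\lambda)|^{2}$ reads
\begin{equation*}
\sum_{k \ge 0} \frac{\phi^{(k)}(r_{0})}{k!}\, \lambda^{k}\bar\lambda^{k} \,=\, \sum_{k,l \ge 0} \Big( \sum_{j} c_{j,k}\,\overline{c_{j,l}} \Big)\, \lambda^{k}\bar\lambda^{l}.
\end{equation*}
Treating $\lambda$ and $\bar\lambda$ as independent indeterminates and matching coefficients forces the off-diagonal terms $k \neq l$ to vanish and yields
\begin{equation*}
\frac{\phi^{(k)}(r_{0})}{k!} \,=\, \sum_{j} |c_{j,k}|^{2} \,\ge\, 0,
\end{equation*}
which contradicts the hypothesis $\phi^{(h)}(r_{0}) < 0$.

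The main obstacle is the availability of the orthogonal direction $w$: this uses $n \ge 2$, which is precisely the setting in which the lemma is applied in the paper (the complex surfaces of Proposition~\ref{radprojindcscka3=0}). Orthogonality is what turns the global identity $e^{\Phi} = \sum_{j}|F_{j}|^{2}$ into pointwise positivity of each individual derivative $\phi^{(k)}(r_{0})$; along a non-orthogonal slice the radial variable $|z(\lambda)|^{2}$ would carry a nonzero linear term in $\lambda, \bar\lambda$, so the matching would only produce mixed inequalities involving several derivatives of $\phi$ simultaneously, which would not suffice.
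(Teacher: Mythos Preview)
The paper does not prove this lemma itself but cites Lemma~3.1 of \cite{LSZconj}, so there is no in-paper argument to compare with. Your proof is correct for $n\ge 2$: the orthogonal slice $\lambda\mapsto z_{0}+\lambda w$ makes $|z(\lambda)|^{2}=r_{0}+|\lambda|^{2}$ exactly, and matching the (unique) real-analytic power series in $(\lambda,\bar\lambda)$ on both sides of $\phi(r_{0}+|\lambda|^{2})=\sum_{j}|\tilde F_{j}(\lambda)|^{2}$ yields $\phi^{(k)}(r_{0})/k!=\sum_{j}|c_{j,k}|^{2}\ge 0$. Since the lemma is only invoked in this paper for $n=2$ (Lemmas~\ref{lemmaprimocriterio}, \ref{radiceintera} and Proposition~\ref{radprojindcscka3=0}), your argument covers every use made of it.

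Two side remarks. First, your comment about radial pluriharmonic functions being constant is unnecessary for the normalization step: for \emph{any} fixed potential $\Phi$, the difference $\log\sum_{j}|G_{j}|^{2}-\Phi$ is pluriharmonic, hence locally equals $\log|H|^{2}$ for some nonvanishing holomorphic $H$, and $F_{j}:=G_{j}/H$ already gives $e^{\Phi}=\sum_{j}|F_{j}|^{2}$. Radiality enters only at the slicing step. Second, as you yourself point out, the argument does not cover $n=1$ (there is no direction orthogonal to $z_{0}\neq 0$), so the lemma \emph{as stated} for arbitrary $n$ is not fully established by your proof; that case is irrelevant here, but for a complete proof one must go back to \cite{LSZconj}.
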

\begin{proof}
See Lemma 3.1 in  \cite{LSZconj} for a proof.
\end{proof}

\begin{lem}\label{lemmaprimocriterio}
Let $g$ be a radial \K\  metric as above and  let $\psi(y) = Ay^2 + y + B$ given by  
\eqref{psia1constRed}.
Assume $g$ is projectively induced and $0$ is a limit point in the domain of definition of $\psi$. 
Then $B=0$ (i.e. $\psi$ has 0 as root).
\end{lem}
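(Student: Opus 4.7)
The plan is to apply the projective-induction obstruction of Lemma \ref{LemmaLSZ}: if $g$ were projectively induced, then every radial derivative $g_h(r) := \frac{d^h}{dr^h} e^{f(r)}$ would have to be nonnegative on $\tilde U$. I will show that $B \ne 0$ forces $g_3$ to be strictly negative near a suitable boundary point of $\tilde U$.

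First I translate the conditions back from the $(t,y)$-picture to the $r$-picture. Starting from $F'(t) = r f'(r) = y$ and the defining identity $F''(t) = \psi(y) = Ay^2 + y + B$, a direct calculation using $\frac{dr}{dt} = r$ gives
\begin{equation*}
f''(r) = A (f'(r))^2 + \frac{B}{r^2}, \qquad f'''(r) = 2A f'(r) f''(r) - \frac{2B}{r^3}.
\end{equation*}
The hypothesis that $0$ is a limit point of the (open) domain of $\psi$, together with $\psi > 0$ there, already forces $B = \psi(0) \ge 0$. Arguing by contradiction, assume $B > 0$. Then the ODE $\frac{dy}{dt} = \psi(y)$ has $\psi(0) = B > 0$, so $y$ reaches $0$ in finite $t$-time; picking a sequence $y_n \to 0^+$ in the domain, the corresponding $t_n$ converge to a finite $t_0$ and $r_n := e^{t_n} \to r_0 \in (0, \infty)$, a boundary point of $\tilde U$. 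From $f'(r_n) = y_n/r_n \to 0$ and the two displayed identities I read off
\begin{equation*}
f'(r_n) \to 0, \qquad f''(r_n) \to \frac{B}{r_0^2}, \qquad f'''(r_n) \to -\frac{2B}{r_0^3}.
\end{equation*}

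Using the standard expressions $g_2 = e^f[f'' + (f')^2]$ and $g_3 = e^f[f''' + 3 f' f'' + (f')^3]$, these limits yield
\begin{equation*}
g_3(r_n) \longrightarrow -\frac{2B}{r_0^3}\, e^{f(r_0)} < 0,
\end{equation*}
so $g_3(r_n) < 0$ for $n$ large. Lemma \ref{LemmaLSZ} then contradicts the assumption that $g$ is projectively induced, and so $B = 0$.

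The main subtlety is that one cannot obtain a contradiction from the second derivative alone: in the relevant case $B > 0$, the limit of $g_2$ is $(B/r_0^2) e^{f(r_0)} > 0$, so no obstruction emerges at order two. What makes the third derivative deliver the correct sign is the identity $\psi'(0) = 1$, which forces the $2Af'f''$ term in $f'''$ to vanish in the limit and isolates the strictly negative contribution $-2B/r_0^3$.
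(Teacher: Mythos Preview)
Your proof is correct and uses the same idea as the paper's: apply Lemma~\ref{LemmaLSZ} at $h=3$ and let $y\to 0$; the paper simply rewrites $r^{h}e^{-f}g_h$ as a polynomial in $y$ alone (via $f'=y/r$, $f''=(\psi-y)/r^2$, $f'''=(\psi'\psi-3\psi+2y)/r^3$), so the constraints $B\ge 0$ and $-2B\ge 0$ drop out with no discussion of where $r$ goes, whereas you carry $r$ along and need the extra (correct) step that $B>0$ forces $r_n\to r_0\in(0,\infty)$. Two tiny remarks: writing $e^{f(r_0)}$ is legitimate because $f'=y/r$ stays bounded near $r_0$, so $f$ extends continuously there; and in your closing commentary the term $2Af'f''$ vanishes simply because $f'\to 0$, not because of $\psi'(0)=1$.
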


\begin{proof}
By taking $h=1,2,3$ in (\ref{gh}), one gets in particular that if $g$ is projectively induced then
\begin{equation}\label{gh123}
\left. \begin{array}{c}
f' \geq 0  \\
f'^2 + f'' \geq 0 \\
f''' + 3 f' f'' + f'^3 \geq 0 \\
\end{array} \right.
\end{equation}

(the derivatives are meant with respect to $r$). These conditions can be rewritten in terms of the function $\psi(y)$ introduced above. More precisely, since $f(r) = \Phi(t)|_{t = \log r}$, $y = \Phi'(t)$, $\psi(y) = \Phi''(t)$, we have $f' = \frac{y}{r}$, $f'' = \frac{\psi - y}{r^2}$,  $f''' = \frac{\psi'\psi - 3 \psi + 2 y}{r^3}$ and (\ref{gh123}) rewrite

\begin{equation}\label{gh123rew}
\left. \begin{array}{c}
y \geq 0  \\
\psi - y + y^2 \geq 0 \\
3y\psi+\psi' \psi - 3 \psi + 2 y - 3 y^2 + y^3  \geq 0 \\
\end{array} \right.
\end{equation}

Now, by replacing $\psi(y) = Ay^2 + y  + B$ in (\ref{gh123rew}) one gets

\begin{equation}\label{gh123rewRepl}
\left. \begin{array}{c}
y \geq 0  \\
(A+1)y^2 + B \geq 0 \\
(2 A y +3y- 2)(A y^2 + y + B)+ 2 y - 3 y^2 + y^3 \geq 0 \\
\end{array} \right.
\end{equation}
From the second and the third condition one immediately deduces that if the metric is such that in the interval of definition one can let $y = \Phi'(t)$ tend to zero, then it must be $B=0$, as claimed.
\end{proof}
\begin{lem}\label{radiceintera}
Let $g$ be a radial \K\  metric as above and  let $\psi(y) = Ay^2 + y + B$ given by  
\eqref{psia1constRed}.
Assume $g$ is projectively induced and   $y_0$ is a limit point in the domain of $\psi$.
If $y_0$ is a positive  root of 
$\psi$ then $y_0 \in \z$.
\end{lem}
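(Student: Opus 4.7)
The plan is to integrate the ODE $y'=\psi(y)$ explicitly near the root $y_0$, extract the leading asymptotic of $e^{\Phi(r)}$ as $r$ approaches the boundary value $r_*$ at which $y(t)=\Phi'(t)$ tends to $y_0$, and then invoke Lemma \ref{LemmaLSZ} with a well-chosen differentiation order $h$.

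Since $\psi$ is a polynomial and $\psi(y_0)=0$, uniqueness for the ODE forces any non-constant solution $y(t)$ having $y_0$ in its closure to approach $y_0$ only asymptotically; hence the hypothesis that $y_0$ is a limit point of the range of $y$ forces $t=\log r\to\pm\infty$, i.e.\ $r_*\in\{0,+\infty\}$. Writing $\psi(y)=A(y-y_0)(y-y_1)$, a partial-fractions integration in the generic case $\alpha:=\psi'(y_0)\neq 0$ gives $y-y_0\sim c\,r^{\alpha}$ as $r\to r_*$, with the sign of $\alpha$ selecting which of $0$ or $+\infty$ is the relevant $r_*$. Integrating once more I would arrive at
\[
e^{\Phi(r)}=C\,r^{y_0}\,(1-K r^\alpha)^{-1/A},
\]
with real constants $K\neq 0$ and $C>0$ (positivity of $C$ being forced by $e^{\Phi}>0$ and $r^{y_0}>0$ on the domain). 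The degenerate double-root case $\alpha=0$ is handled analogously, with $(1-Kr^\alpha)^{-1/A}$ replaced by a $|\log r-t_0|^{-1/A}$-type factor.

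Expanding the second factor as a power series in $u=Kr^\alpha$, absolutely convergent near $r_*$ since $u\to 0$ there, and differentiating $h$ times in $r$, one obtains
\[
g_h(r)=\frac{d^h e^{\Phi}}{dr^h}=C\,\prod_{j=0}^{h-1}(y_0-j)\,r^{y_0-h}\,\bigl(1+o(1)\bigr),\qquad r\to r_*,
\]
the remaining terms $O\bigl(r^{y_0+k\alpha-h}\bigr)$ with $k\geq 1$ being genuinely subdominant because $r^{k\alpha}\to 0$ at $r_*$. Now suppose for contradiction that $y_0>0$ is not an integer, and take $h:=\lceil y_0\rceil+1$: among the factors $y_0-j$ for $j=0,\dots,h-1$, exactly the one with $j=\lceil y_0\rceil$ is negative while the remaining $\lceil y_0\rceil$ are positive, hence $\prod_{j=0}^{h-1}(y_0-j)<0$. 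Combined with $C>0$ and $r^{y_0-h}>0$, this yields $g_h(r)<0$ for every $r$ in $\tilde U$ sufficiently close to $r_*$, contradicting Lemma \ref{LemmaLSZ}. Hence $y_0\in\z$.

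The main technical obstacle is to make rigorous that the subleading terms $O(r^{y_0+k\alpha-h})$ (or, in the double-root case, the $|\log r|$-corrections) do not flip the sign of $g_h$ near $r_*$. For the simple-root case this reduces to termwise differentiation of the convergent binomial series for $(1-u)^{-1/A}$ together with the observation that $r^{k\alpha}\to 0$ at $r_*$ by the choice of sign of $\alpha$; in the double-root case one checks that any positive power of $|\log r|$ is dominated, after $h$ differentiations, by pure powers of $r$, so the falling factorial $\prod_{j=0}^{h-1}(y_0-j)$ still governs the sign of $g_h$ in the limit.
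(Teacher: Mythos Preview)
Your argument is correct and isolates the same mechanism as the paper: near the root $y_0$, the $h$-th derivative of $e^f$ is controlled by the falling factorial $y_0(y_0-1)\cdots(y_0-h+1)$, which for $h=[y_0]+2$ is negative whenever $y_0\notin\z$, and Lemma~\ref{LemmaLSZ} finishes. The execution is different, however. The paper never integrates the ODE; instead it proves by a short induction on $k$ the \emph{exact} identity
\[
e^{-f}\,\frac{d^k e^f}{dr^k}=\frac{\psi(y)\,P_k(y)+y(y-1)\cdots(y-k+1)}{r^k}
\]
valid at every point of $\tilde U$, where $P_k$ is some polynomial in $y$ that is never computed (only the fact that $\psi$ is a polynomial enters the induction). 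Since $\psi(y)\to 0$ as $y\to y_0$, continuity of this polynomial expression in $y$ gives the sign at once. This sidesteps everything you flag as ``the main technical obstacle'': there is no closed form for $e^\Phi$ to derive, no binomial series to expand, no termwise differentiation to justify, no subleading terms to dominate, no identification of $r_*$, and no case split on $A$ or on the multiplicity of the root.

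One small gap in your version: the formula $e^{\Phi}=C\,r^{y_0}(1-Kr^{\alpha})^{-1/A}$ presupposes $A\neq 0$, whereas the lemma allows $A=0$. In that case $\psi(y)=y-y_0$ integrates to $e^{\Phi}=C\,r^{y_0}e^{Kr}$; your leading-term analysis still goes through, but this case should be stated separately. (Incidentally, the double-root case you treat is vacuous here: a double root at $y_0>0$ forces $A<0$ and hence $\psi\le 0$, contradicting $F''>0$.)
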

\begin{proof}
For every $k \geq 1$ one can prove by induction on $k$ that 
\begin{equation}\label{formulaiND}
e^{-f}\frac{d^k e^f}{d r^k}  = \frac{\psi P_k + y(y-1)(y-2) \cdots (y-k+1)}{r^k},
\end{equation}
where $P_k$ is a polynomial in $y$.
Indeed, for $k=1$ one has
$$e^{-f}\frac{d e^f}{d r} = f' = \frac{F'(t)}{r} = \frac{y}{r}$$
that is (\ref{formulaiND}) with $P_0=0$. Now, assuming by induction that (\ref{formulaiND}) is true for some $k$, we claim that it is true for $k+1$. Indeed, one has
\begin{equation*}
\begin{split}
\frac{d^{k+1} e^f}{d r^{k+1}} 
=&\frac{d}{dy} \left[ \psi P_k + y(y-1)(y-2) \cdots (y-k+1) \right] \frac{dy}{dr} \cdot \frac{e^f}{r^k}+ \\
+&\left[ \psi P_k + y(y-1)(y-2) \cdots (y-k+1) \right] \frac{d}{dr} \left( \frac{e^f}{r^k} \right)
\end{split}
\end{equation*}
and the claim follows by using 
$$\frac{dy}{dr} = \frac{d}{dr}F'(t) = \frac{F''(t)}{r} = \frac{\psi}{r},$$
$$\frac{d}{dr} \left( \frac{e^f}{r^k} \right) = \frac{rf' - k}{r^{k+1}} e^f = \frac{y- k}{r^{k+1}} e^f$$
and the fact that $\psi$ is a polynomial.
Now, notice that from (\ref{formulaiND}) it follows that if $y_0$ is a root of $\psi$, then
\begin{equation}\label{formulaiNDroot}
e^{-f}\frac{d^k e^f}{d r^k} _{|_{y=y_0}} = \frac{ y_0(y_0-1)(y_0-2) \cdots (y_0-k+1)}{r^k}
\end{equation}
Assume $y_0 \notin \z$.
By continuity, in a neighbourhood of $y=y_0$ one has that $\frac{d^k e^f}{d r^k}$ has the same sign of $y_0(y_0-1)(y_0-2) \cdots (y_0-k+1)$ which is strictly negative for $k = [y_0]+2$ (where $[y_0]$ denotes the integer part of $y_0$). Hence, by Lemma \ref{LemmaLSZ}, the metric is not projectively induced.
\end{proof}

\begin{proof}[Proof of Proposition \ref{radprojindcscka3=0}]
By the discussion before the statement of the proposition, we are left to prove that the metrics corresponding to cases (\ref{A=03}), (\ref{An0iii}), (\ref{An0iv}), (\ref{An0vi}), (\ref{An0vii}) and (\ref{An0viii})
are not projectively induced for any values of $\lambda, \xi, \mu$, and that if $\lambda \notin \z$ or $\frac{\lambda \mu}{2} \notin \z$ then the metric (\ref{An0v}) is not projectively induced.
 
By Lemma \ref{lemmaprimocriterio}  one immediately sees that the metrics (\ref{A=03}), (\ref{An0iii}), (\ref{An0iv}), (\ref{An0vi}), and (\ref{An0viii}) are not projectively induced for any values of the parameters.

For the remaining cases (\ref{An0v}) and (\ref{An0vii}), we cannot use the same argument since we have respectively $\frac{\mu \lambda}{2} < y < + \infty$ and $\frac{\mu (1-\zeta)}{2} < y < \frac{\mu (1+\zeta)}{2}$ and so $0$ is not a limit point in the domain of definition of $\psi$, so we will use another approach. More precisely, we will take the explicit expressions of the potentials of these metrics, which are respectively (see the statement of Theorem 2.1 in \cite{LSZ})

\begin{equation}\label{explpotcasev}
f(r) =\log \frac{r^{\frac{\mu \lambda}{2}}}{(1 - \xi r^{\lambda+1})^{\mu}} 
\end{equation}
and
\begin{equation}\label{explpotcasevii}
f(r) =\log [r^{\frac{\mu(1+\zeta)}{2}}(1 + r^{-\zeta})^{\mu}]
\end{equation}
and we will apply the criterion given in Lemma \ref{LemmaLSZ}. 

Let us begin from case (\ref{An0vii}): by Lemma \ref{radiceintera}, if one of the two roots 

$$  k = \frac{\mu (1-\zeta)}{2}, \ \ l = \frac{\mu (1 + \zeta)}{2}$$

\noindent of $\psi$ is not an integer, then the metric is not projectively induced. Assume thus that $k ,l \in \z^+$, which implies also $k+l = \mu \in \z$. Then, by (\ref{explpotcasevii}) we have

$$e^{f(r)} = r^l (1 + r^{-\zeta})^{\mu} = \sum_{s=0}^{\mu} {\mu \choose s} r^{l - \zeta s} =  \sum_{s=0}^{\mu} {\mu \choose s} r^{k + (\mu - s) \zeta}.$$

\noindent By a straight calculation one sees that, for $k_0 = k+2$ one has

$$\frac{d^{k_0}}{dr^{k_0}} e^{f(r)} = r^{\zeta - 2} (c_0 + c_1 r^{\zeta} + c_2 r^{2 \zeta} + \cdots + c_{\mu-1} r^{(\mu-1)\zeta}),$$
for suitable constants $c_j$, with
$$c_0 = \mu (\zeta + k) \cdots (\zeta+1) \zeta (\zeta -1)$$
is negative since $0 < \zeta < 1$. This implies that $\frac{d^{k_0}}{dr^{k_0}} e^{f(r)} \rightarrow - \infty$ for $r \rightarrow 0^+$ and proves that the metric is not projectively induced for any values of the parameters.

For the last case (\ref{An0v}), we first notice that, by Lemma \ref{radiceintera}, if the root $\frac{\mu \lambda}{2}$ of $\psi$ is not an integer then the metric is not projectively induced. We are then left to show that when $\frac{\mu \lambda}{2} \in \z$ and $\lambda \notin \z$ then the metric is not projectively induced. By (\ref{explpotcasev}) one has

$$e^{f(r)} = \sum_{i=0}^{\infty} \frac{\mu(\mu+1) \cdots (\mu + i -1)}{i!} \xi^i r^{(\lambda+1)i + \frac{\mu \lambda}{2}}$$

Then, by a straightforward computation, one sees that, for $k_0 = \frac{\mu \lambda}{2} + [\lambda] + 3$ (where $[\lambda]$ denotes the integer part of $\lambda$)

$$\frac{d^{k_0}}{dr^{k_0}} e^{f(r)} = r^{\lambda- [\lambda] - 2} (c_0 + c_1 r^{\lambda+1} + c_2 r^{2(\lambda+1)} + \cdots)$$

where

$$c_0 = \mu \xi \left( \lambda + \frac{\mu \lambda}{2} + 1 \right) \left( \lambda + \frac{\mu \lambda}{2} \right) \cdots   (\lambda - [\lambda]) (\lambda - [\lambda]-1)$$

\noindent is strictly negative since we are assuming $\lambda \notin \z$. This implies that $\frac{d^{k_0}}{dr^{k_0}} e^{f(r)} \rightarrow - \infty$ for $r \rightarrow 0^+$ and concludes the proof.

\end{proof}

\section{The proofs of Theorem \ref{class1} and Theorem \ref{class2}}\label{sec3}

\begin{proof}[Proof of Theorem \ref{class1}]
By Theorem \ref{mainteor}, the finiteness of TYCZ expansion implies that the Kempf distortion function reduces to the  polynomial 
$T_{mg}=m+a_1$. This forces  $a_2=0$ and hence, using again the fact that $M$ is a complex curve, one deduces by \eqref{coefflu} that $a_1=\frac{1}{2}{\rm scal}_g=const$, namely  the metric $g$ is a cscK metric. Notice that, by completeness, if  $(M, g)$ were  simply-connected then  one would  deduce that  it is a one-dimensional  complex space form (a), (b) and (c), where $\lambda$ is a positive integer  (we are also using the integrality of the \K\ form $\omega$ associated to $g$ to obtain the integrality of $\lambda$).
 Hence, in order to prove the theorem, 
we are reduced to show that  $M$ is simply-connected. Assume, by contradiction, that $M$ is not simply connected and let $p:(\tilde M, \tilde g)\rightarrow (M, g)$ be the universal covering map (which is a non-injective \K\ immersion satisfying $p^*g=\tilde g$). Then $(\tilde M, \tilde g)$ would be one of the three one-dimensional complex space forms (a), (b), (c),  and hence there exists an {\em injective} full  \K\ immersion $\psi :\tilde M\rightarrow \CP^N$ (see Remark \ref{projind} above).  Since $T_{g}=1+a_1$ is constant one deduce  (see \eqref{obstr}) that the coherent states 
map $\varphi_1:M\rightarrow \CP^{N(1)}$ is a full  \K\ immersion. Hence the holomorphic map $\varphi_1\circ p:\tilde M\rightarrow \CP^{N(1)}$  satisfies   $(\varphi_1\circ p)^*g_{FS}=p^*\varphi_1^*g_{FS}=\tilde g$. By the celebrated Calabi's rigidity theorem
\cite{Cal}
$N(1)=N$ and  there exists a unitary transformation $U$ of $\CP^N$ such that $U\circ \psi= \varphi_1\circ p$.
This forces $\varphi_1\circ p$ and hence $p$ to be injective, yelding the desired contradiction.
\end{proof}

Finally, we prove Theorem \ref{class2}.

\begin{proof}[Proof of Theorem \ref{class2}]
Combining the assumptions  with Theorem \ref{mainteor} and Remark \ref{a2vanish}  one gets that, for some constant $a_1$, the Kempf distortion function associated to $(M, g)$.  is given by
$T_{mg}=m^2+a_1m$.
Therefore the metric $g$ is forced to be balanced for all $m$ (or equivalently $(L, h)$ is a regular quantization).
Recall that a balanced metric is automatically projectively induced and, as we have already pointed out in the Introduction, (i), (ii), (iii) and (iv) in Theorem \ref{class2} all admit an open and dense subset with a cscK metric with radial potential with finite TYCZ expansion. Thus,  by using Proposition \ref{radprojindcscka3=0}, we are left to show that the metric of case (\ref{An0v}) of the classification in Section 3, given by potential (\ref{explmetr7}), does not admit a regular quantization for $\xi, \lambda,  \mu >0$ with $\lambda, \frac{\lambda \mu}{2} \in \z$. In order to do that, recall that by \eqref{obstr} 
this happens if and only if
\begin{equation}\label{defbalanced}
\frac{i}{2\pi} \partial \bar \partial \log \sum_{j}^{} |s_j(z)|^2 = \frac{i}{2\pi} \partial \bar \partial \hat\Phi
\end{equation}
where $\{ s_j \}$ is an orthonormal basis of the space $\mathcal{H}_{\mu, \lambda, \xi}$ of holomorphic functions $s = s(z)$ on the domain of definition of the metric
$$U= \left\{ r = |z_1|^2 + |z_2|^2 \ | \ r < \left( \frac{1}{\xi} \right)^{\frac{1}{\lambda+1}} \right\}$$
which are bounded with respect to the norm
\begin{equation}\label{norm}
\| s \|^2_{h_{\mu}} = \int_Uh_{\mu}(z) |s(z)|^2 dv(z)
\end{equation}
endowed with the hermitian product\footnote{Notice that we are using the fact that $U$ is dense in $M$ in order  to integrate on $U$.} $\langle s, t \rangle_{h_{\mu}} = \int_U h_{\mu}(z) s(z) \overline{t(z)} dv(z)$
(cf. \eqref{hilbertspace} and \eqref{kempfnonc} in the introduction), where

$$h_{\mu}(z) =e^{-\hat\Phi(z)} = \frac{[1 - \xi r^{(\lambda+1)}]^{\mu}}{r^{\frac{\mu \lambda}{2}}}, \ \ r = |z_1|^2 + |z_2|^2$$

\noindent and $dv(z) = \left( \frac{i}{2\pi} \right)^2 \det(g) dz_1 \wedge d \bar z_1\wedge dz_2 \wedge d \bar z_2$ is the volume form.

Now, take $s(z) = z_1^j z_2^k$. By passing to polar coordinates $z_1 = \rho_1 e^{i \theta_1}$, $z_2 = \rho_2 e^{i \theta_2}$, and using (\ref{detg}) we get

\begin{equation*}
\begin{split}
\| s \|^2_{h_{\mu}} = & \int_U  |z_1|^{2j} |z_2|^{2k} h_{\mu} d v(z) \\ 
=&
 2 \xi \mu^2 (\lambda + 1)^2 \int  \rho_1^{2j+1} \rho_2^{2k+1} \frac{(1 - \xi (\rho_1^2 + \rho_2^2)^{\lambda+1})^{\mu}}{(\rho_1^2 + \rho_2^2)^{\frac{\mu \lambda}{2} }}  \frac{(\rho_1^2 + \rho_2^2)^{\lambda-1}[\lambda + \xi(\lambda + 2) (\rho_1^2 + \rho_2^2)^{\lambda+1}]}{(1 - \xi (\rho_1^2 + \rho_2^2)^{\lambda+1})^3} d \rho_1 d \rho_2,
\end{split}
\end{equation*}

where we are integrating on $\rho_1^2+\rho_2^2<(\frac{1}{\xi})^\frac{1}{\lambda+1}$.

Now by setting $\rho = \sqrt{\rho_1^2 + \rho_2^2}$ we can make the substitution $\rho_1 = \rho \cos \theta, \rho_2 = \rho \sin \theta$, $0 < \rho < \infty$, $0 < \theta < \frac{\pi}{2}$, 
and using
$$\int_0^ \frac{\pi}{2} (\cos \theta)^{2j+1} (\sin \theta)^{2k+1}=\frac{j!k!}{2(j+k+1)!}$$
the previous  integral becomes
\begin{equation}\label{integral1}
 \xi \mu^2 (\lambda + 1)^2 \frac{j!k!}{(j+k+1)!} \int_0^{\left( \frac{1}{\xi} \right)^{\frac{1}{2(\lambda+1)}} }  \rho^{2j+2k+2 \lambda - \mu \lambda +1} \frac{\lambda + \xi(\lambda + 2) \rho^{2(\lambda+1)}}{(1 - \xi \rho^{2(\lambda+1)})^{3-  \mu}} d \rho.
\end{equation}

Let us make the change of variable 

$$x = \xi \rho^{2(\lambda+1)}, \ \ dx = 2\xi(\lambda+1) \rho^{2 \lambda +1} d \rho$$

\noindent and (\ref{integral1}) rewrites

\begin{equation}\label{integral2}
 \xi^{-\frac{2j+2k-\mu \lambda}{2(\lambda+1)}} \mu^2 (\lambda + 1) \frac{j!k!}{(j+k+1)!} \int_0^{1 } x^{\frac{2j+2k-\mu \lambda}{2(\lambda+1)}} \frac{\lambda + (\lambda + 2)x }{(1 - x)^{3-  \mu}} d x
\end{equation}

and then one easily sees that it converges if and only if $\mu > 2$ and $ \frac{2j+2k-\mu \lambda}{2(\lambda+1)} > -1$, i.e.

\begin{equation}\label{conditionsConv}
j+k >  \frac{\mu \lambda}{2} -(\lambda+1).
\end{equation}

This is the condition for a monomial $z_1^j z_2^k$ to belong to the space $\mathcal{H}_{\mu, \lambda, \xi}$. Since by radiality it is easy to see that the monomials $z_1^j z_2^k$ are pairwise orthogonal, we see that $\{ z_1^j z_2^k \}_{j+k >  \frac{\mu \lambda}{2} -(\lambda+1)}$ form a complete orthogonal basis of in $\mathcal{H}_{\mu, \lambda, \xi}$, so the condition (\ref{defbalanced}) for the metric to be balanced can be rewritten

\begin{equation}\label{defbalanced2}
\frac{i}{2\pi} \partial \bar \partial \log \left[\sum_{j+k > \frac{\mu \lambda}{2} -(\lambda+1)} \frac{|z_1|^{2j} |z_2|^{2k}}{\|z_1^j z_2^k\|^2_{h_{\mu}}}\right] = \frac{i}{2\pi} \partial \bar \partial \hat\Phi
\end{equation}

This means that there exists a holomorphic function $f$ such that

$$ \log\left[ \sum_{j+k > \frac{\mu \lambda}{2} -(\lambda+1)} \frac{|z_1|^{2j} |z_2|^{2k}}{\|z_1^j z_2^k\|^2_{h_{\mu}}}\right] = \hat\Phi + Re(f).$$

By radiality, $f$ is forced to be constant and we can rewrite this condition as

\begin{equation}\label{defbalanced3}
\sum_{j+k > \frac{\mu \lambda}{2} -(\lambda+1)} \frac{|z_1|^{2j} |z_2|^{2k}}{\|z_1^j z_2^k\|^2_{h_{\mu}}}= C e^{\hat\Phi} = C \frac{(|z_1|^2 + |z_2|^2)^{\frac{\mu \lambda}{2}}}{[1 - \xi (|z_1|^2 + |z_2|^2)^{(\lambda+1)}]^{\mu}}
\end{equation}

\noindent for some $C>0$.

Now, we notice that since we are assuming $\lambda > 0$, then condition (\ref{conditionsConv}) is fulfilled for $j+k = \frac{\mu \lambda}{2} -1$ (recall that $\lambda, \mu > 0$ and that we are assuming that $\frac{\mu \lambda}{2} \in \z$, otherwise the metric is not projectively induced). But it is easy to see that the Taylor expansion of the right-hand side of (\ref{defbalanced3}) does not contain the term $|z_1|^{2j} |z_2|^{2k}$ for  $j+k = \frac{\mu \lambda}{2} -1$, so (\ref{defbalanced3})  cannot be satisfied and the metric is not balanced. This concludes the proof of  the theorem.\end{proof}

\end{document}